\begin{document}

\title{Prime Structures in a Morita Context}

\author[M. B. Calci]{Mete Burak Calci}
\address{Mete Burak Calci, Department of Mathematics, Ankara University,  Turkey}
\email{mburakcalci@gmail.com}

\author[S. Halicioglu]{Sait Halicioglu}
\address{Sait Halicioglu, Department of Mathematics, Ankara University, Turkey}
\email{halici@ankara.edu.tr}

\author[A. Harmanci]{Abdullah Harmanci}
\address{Abdullah Harmanci, Department of Mathematics, Hacettepe University,~Turkey}
\email{harmanci@hacettepe.edu.tr}

\author[B. Ungor]{Burcu Ungor}
\address{Burcu Ungor, Department of Mathematics, Ankara University,
Turkey} \email{bungor@science.ankara.edu.tr}

\newtheorem{thm}{Theorem}[section]
\newtheorem{lem}[thm]{Lemma}
\newtheorem{prop}[thm]{Proposition}
\newtheorem{cor}[thm]{Corollary}
\newtheorem{exs}[thm]{Examples}
\newtheorem{defn}[thm]{Definition}
\newtheorem{nota}{Notation}
\newtheorem{rem}[thm]{Remark}
\newtheorem{ex}[thm]{Example}
\newtheorem{que}[thm]{Question}

\begin{abstract} In this paper, we study on the primeness and semiprimeness
of a Morita context related to the rings and modules. Necessary
and sufficient conditions are investigated for an ideal of a
Morita context to be a prime ideal and a semiprime ideal. In
particular, we determine the conditions under which a Morita
context is prime and semiprime. \vspace{2mm}

\noindent {\bf2010 MSC:}  16D80, 16S50, 16S99

\noindent {\bf Key words:} Morita context, prime ideal, semiprime
ideal, prime radical, prime ring, semiprime ring
\end{abstract}

\maketitle

\section{Introduction} Throughout this paper all rings are associative with $1\neq 0$
and modules are unitary. A proper ideal $I$ of a ring $R$ is
called \emph{prime} if for any elements $a$ and $b$ in $R$,
$aRb\subseteq I$ implies $a\in I$ or $b\in I$, equivalently if for
every pair of ideals $A, B$ of $R$, $AB\subseteq I$ implies
$A\subseteq I$ or $B\subseteq I$. The \emph{prime radical} of a
ring $R$ is the intersection of all prime ideals of $R$ and
denoted by $P(R)$. It is clear that $P(R)$ contains every nilpotent
ideal of $R$. Recall that a proper ideal $I$ of $R$ is said to be
\emph{semiprime} if for any $a\in R$, $aRa\subseteq I$ implies
$a\in I$, equivalently for every ideal $A$ of $R$, $A^2\subseteq
I$ implies $A\subseteq I$. The concepts of prime and semiprime
ideals have very important roles in the ring theory. A ring $R$ is
called \emph{prime} if $0$ is a prime ideal, equivalently for any
$a, b\in R$, $aRb=0$ implies $a=0$ or $b=0$. A ring $R$ is said to
be \emph{semiprime} if it has no nonzero nilpotent ideal,
equivalently for any $a\in R$, $aRa=0$ implies $a=0$. Obviously,
prime rings are semiprime. Also, it is well-known that a ring $R$
is semiprime if and only if $P(R)=0$. A proper submodule $N$ of a
left $R$-module $M$ is called \emph{prime} if for any $r\in R$ and
$m\in M$, $rRm\subseteq N$ implies $rM\subseteq N$ or $m\in N$. It
is easy to show that if $N$ is a prime submodule of $M$, then the
annihilator of the module $M/N$ is a two-sided prime ideal of $R$.
A module $M$ is said to be \emph{prime} if $0$ is a prime
submodule of $M$, i.e., for any $r\in R$ and $m\in M$, $rRm=0$
implies $rM=0$ or $m=0$.

 A \textit{Morita context} is a 4-tuple
$(R,V,W,S)$, where $R,S$ are rings, $_RV_S$ and $_SW_R$ are
bimodules with context products $V\times W\rightarrow R$ and
$W\times V\rightarrow S$ written multiplicatively as $(v,w)\mapsto
vw$ and $(w,v)\mapsto wv$, such
that $\left(%
\begin{array}{cc}
  R& V \\
  W & S \\
\end{array}%
\right)$ is an associative ring with the usual matrix operations.
Morita contexts appeared in the work of Morita \cite{Mor} and they
play an important role in the category of rings and modules that
described equivalences between full categories of modules over
rings. One of the fundamental results in this direction says that
the categories of left modules over the rings $R$ and $S$ are
equivalent if and only if there exists a strict Morita context
connecting $R$ and $S$. This concept appears in disguise in the
literature of the ring theory. A Morita context forms a very large
class of rings generalizing matrix rings. Formal triangular matrix
rings are some of the obvious examples of Morita contexts, and
have been  studied extensively. The concept of Morita context has
been studied by many researchers (see,
e.g.,\cite{Good,Hang,Mcconnel,Tang}). In \cite{Sands}, Sands
investigated radicals of Morita context and obtained valuable
results about general cases of radicals. Recently, in \cite{Tang},
Tang et. al. specify in detail about the structure of ideals in a
Morita context, worthy results are exhibited concerned with the
structures of some special rings $K_s(R)$ and some cases of a
Morita context. $K_s(R)$ is a special kind of the Morita context.
Let $R=V=W=S$ and
$s$ is a central element of $R$. The 4-tuple $\left(%
\begin{array}{cc}
  R & R \\
  R & R \\
\end{array}%
\right)$ becomes a ring with usual matrix addition and with
multiplication defined by $$\left(%
\begin{array}{cc}
  r_1 & v_1 \\
  w_1 & s_1 \\
\end{array}%
\right)\left(%
\begin{array}{cc}
  r_2 & v_2 \\
  w_2 & s_2 \\
\end{array}%
\right)=\left(%
\begin{array}{cc}
  r_1r_2+sv_1w_2 & r_1v_2+v_1s_2 \\
  w_1r_2+s_1w_2 & sw_1v_2+s_1s_2 \\
\end{array}%
\right).$$ This ring is denoted by $K_s(R)$. In \cite{Tang}, it is
proved that $K_s(R)$ is prime if and only if $R$ is prime and
$s\neq 0$, and $K_s(R)$ is semiprime if and only if $R$ is
semiprime and $s$ is a non zero-divisor. Also prime ideals of
$K_s(R)$ are determined. Motivated by these, in this paper, we
study on the primeness and semiprimeness of a Morita context in a
general setting. We determine equivalent conditions for an ideal
to be a prime ideal or a semiprime ideal in a Morita context. In
particular, prime radical of a Morita context is characterized. We
obtain necessary and sufficient conditions for a Morita context to
be a prime ring and a semiprime ring.

In what follows, $\Bbb{Z}$ and $\Bbb{Z}_n$ denote the ring of
integers and the ring of integers modulo $n$ for some positive
integer $n$, respectively.  Also $\left(%
\begin{array}{cc}
  I & V_1 \\
  W_1 & J \\
\end{array}%
\right)$ denotes the subset of a Morita context $\left(%
\begin{array}{cc}
  R & V \\
  W & S \\
\end{array}%
\right)$ consisting of all matrices $\left(%
\begin{array}{cc}
  r & v \\
  w & s \\
\end{array}%
\right)$ with $r\in I$, $s\in J$, $v\in V_1$, $w\in W_1$ where $I,
V_1, W_1, J$ are subsets of $R, V, W, S$, respectively.

\section{Primeness of a Morita Context}

In this section, we work on ideal structures in a Morita context
in terms of primeness and semiprimeness. We exhibit some results
in order to prove our main theorems. In Lemma
\ref{temel_ideal_teo} for a Morita context $(R,V,W,S)$, elements
of $R\oplus W$ and of $V\oplus S$ are written as column vectors,
and elements of $R\oplus V$ and of $W\oplus S$ are written as row
vectors. For unexplained notations and definitions we refer to
\cite{Tang}. The following result is very useful for this paper.

\begin{lem}\cite[Lemma 2.1]{Tang} \label{temel_ideal_teo}
Let $(R,V,W,S)$ be a Morita context and $\emptyset \neq U\subseteq
(R,V,W,S)$. Then the following hold.
\begin{enumerate}
\item $U$ is a right ideal of $(R,V,W,S)$ if and
only if $U=(C_1(U)C_2(U))$ where $C_1(U)$ is a right $R$-submodule
of $R\oplus W$ and $C_2(U)$ is a right $S$-submodule of $V\oplus
S$ with $C_1(U)V\subseteq C_2(U)$ and $C_2(U)W\subseteq C_1(U)$.

\item $U$ is a left ideal of $(R,V,W,S)$ if and only if $U=\left(%
\begin{array}{c}
  R_1(U) \\
  R_2(U) \\
\end{array}%
\right)$ where $R_1(U)$ is a left $R$-submodule of $R\oplus V$ and
$R_2(U)$ is left $S$-submodule of $W\oplus S$ with
$VR_2(U)\subseteq R_1(U)$ and $WR_1(U)\subseteq R_2(U)$.

\item $U$ is an ideal of $(R,V,W,S)$ if and only if
$U=(I,V_1,W_1,J)$ where $I\trianglelefteq R$, $J\trianglelefteq
S$, $V_1 \leqslant~ _RV_S$ and $W_1 \leqslant ~ _SW_R$ with
$V_1W\subseteq I$, $W_1V\subseteq J$, $IV\subseteq V_1$,
$JW\subseteq W_1$, $VW_1\subseteq I$, $WV_1\subseteq J$,
$VJ\subseteq V_1$, $WI\subseteq W_1$.
\end{enumerate}
\end{lem}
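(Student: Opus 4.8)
The plan is to prove each of the three parts by a direct "unravel the matrix multiplication" argument. Since the three parts are parallel (right ideals, left ideals, two-sided ideals), I would prove part (1) in full detail and then indicate that part (2) is dual (obtained by transposing, i.e., working with the opposite ring, which swaps rows and columns and interchanges the roles of $V$ and $W$), and that part (3) is just the conjunction of the conditions from (1) and (2) after identifying the column/row data with the block data $(I,V_1,W_1,J)$. So the real content is part (1).

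For part (1), I would argue as follows. Write a typical element of the Morita context as $\left(\begin{array}{cc} r & v \\ w & s \end{array}\right)$ and split it into its two columns: the first column lives in $R\oplus W$ and the second in $V\oplus S$. Given $\emptyset\neq U\subseteq(R,V,W,S)$, define $C_1(U)$ to be the set of first columns of elements of $U$ and $C_2(U)$ the set of second columns. First I would check that if $U$ is a right ideal then $U=(C_1(U)\ C_2(U))$, i.e. $U$ is exactly the set of matrices whose first column is in $C_1(U)$ and second column is in $C_2(U)$; the inclusion $\subseteq$ is trivial, and for $\supseteq$ one uses the idempotents $e_{11}=\left(\begin{array}{cc}1&0\\0&0\end{array}\right)$ and $e_{22}=\left(\begin{array}{cc}0&0\\0&1\end{array}\right)$: right-multiplying an element of $U$ by $e_{11}$ kills the second column and right-multiplying by $e_{22}$ kills the first column, so each column of each element of $U$ can be realized independently, and then arbitrary combinations of a first column from one element and a second column from another can be added back together using that $U$ is closed under addition. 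Next, multiplying a general element of $U$ on the right by a general element $\left(\begin{array}{cc}r'&v'\\w'&s'\end{array}\right)$ and reading off the two resulting columns shows precisely that $C_1(U)$ must be a right $R$-submodule of $R\oplus W$, that $C_2(U)$ must be a right $S$-submodule of $V\oplus S$, and that the "cross" multiplications force $C_1(U)V\subseteq C_2(U)$ (the first column, multiplied by the $v'$ entry, lands in the second column slot) and $C_2(U)W\subseteq C_1(U)$ (the second column, multiplied by the $w'$ entry, lands in the first column slot). Conversely, given data $C_1(U),C_2(U)$ with exactly these closure properties, the same computation shows $(C_1(U)\ C_2(U))$ is closed under right multiplication, and closure under addition is immediate, so it is a right ideal.

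For part (3), combining (1) and (2): an ideal $U$ is simultaneously of the column form and the row form, which forces $U=(I,V_1,W_1,J)$ where $I=e_{11}Ue_{11}$-component, $J=e_{22}Ue_{22}$-component, $V_1=e_{11}Ue_{22}$-component, $W_1=e_{22}Ue_{11}$-component. Then $I\trianglelefteq R$ and $J\trianglelefteq S$ come from (1) and (2) applied to the corner entries, $V_1$ and $W_1$ are sub-bimodules for the same reason, and the eight containment conditions $V_1W\subseteq I$, $W_1V\subseteq J$, $IV\subseteq V_1$, $JW\subseteq W_1$, $VW_1\subseteq I$, $WV_1\subseteq J$, $VJ\subseteq V_1$, $WI\subseteq W_1$ are exactly the statements that all sixteen products appearing when one multiplies $(I,V_1,W_1,J)$ by the full context on both sides land back inside the respective blocks — four of them come from right multiplication (the cross-conditions of part (1), spelled out block by block) and four from left multiplication (the cross-conditions of part (2)). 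Conversely, the same bookkeeping shows that any $(I,V_1,W_1,J)$ satisfying all eight containments is a two-sided ideal.

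The main obstacle, such as it is, is purely organizational rather than mathematical: one must be careful and systematic in expanding the matrix products and matching each of the many resulting entries to the correct submodule condition, and in part (3) in not double-counting or omitting any of the eight containments. There is no deep idea — the proof is a verification — so I would emphasize clean notation (the idempotents $e_{11},e_{22}$ and the column/row projections) to keep the case analysis transparent, and simply state that parts (2) and (3) follow "by the symmetric argument" once part (1) is done carefully.
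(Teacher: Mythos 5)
The paper gives no proof of this lemma at all---it is quoted verbatim from \cite[Lemma 2.1]{Tang}---so there is nothing to compare against except the standard argument, and your proposal is exactly that standard argument, carried out correctly: the idempotent trick with $e_{11}$ and $e_{22}$ (which exist because the paper assumes unital rings, and which the paper itself uses in its Lemma 2.2) legitimately recovers the column/row/block decomposition, and your bookkeeping of the four right-multiplication and four left-multiplication containments in part (3) matches the eight conditions in the statement. No gaps.
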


\begin{lem}\label{c1u}
Let $T=(R,V,W,S)$ be a Morita context and $U$ a right ideal of
$T$. Then the following hold.
\begin{enumerate}
\item If $\left(%
\begin{array}{c}
  r \\
  w \\
\end{array}%
\right)\in C_1(U)$, then $\left(%
\begin{array}{cc}
  r & 0 \\
  w & 0 \\
\end{array}%
\right)\in U$,
\item If $\left(%
\begin{array}{c}
  v\\
  s \\
\end{array}%
\right)\in C_2(U)$, then $\left(%
\begin{array}{cc}
  0 & v \\
  0 & s \\
\end{array}%
\right)\in U$.
\end{enumerate}
\end{lem}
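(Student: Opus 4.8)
The plan is to obtain both statements by right multiplication with the two diagonal idempotents of $T$, using only Lemma \ref{temel_ideal_teo}(1). Since $U$ is a right ideal, that lemma gives $U=(C_1(U)\,C_2(U))$, where (by its construction) $C_1(U)$ is the set of first columns $\left(\begin{smallmatrix} r \\ w \end{smallmatrix}\right)$ appearing in the matrices of $U$ and $C_2(U)$ is the set of their second columns.

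First I would note that $e_{11}=\left(\begin{smallmatrix} 1 & 0 \\ 0 & 0 \end{smallmatrix}\right)$ and $e_{22}=\left(\begin{smallmatrix} 0 & 0 \\ 0 & 1 \end{smallmatrix}\right)$ belong to $T$, since $1\in R$, $1\in S$, and $0$ lies in each of $R,V,W,S$. For (1), given $\left(\begin{smallmatrix} r \\ w \end{smallmatrix}\right)\in C_1(U)$, choose $v\in V$ and $s\in S$ with $x:=\left(\begin{smallmatrix} r & v \\ w & s \end{smallmatrix}\right)\in U$. Carrying out the Morita-context matrix multiplication $x\,e_{11}$ and using $r\cdot 1=r$, $w\cdot 1=w$, and the vanishing of every context/module product having a $0$ factor, one gets $x\,e_{11}=\left(\begin{smallmatrix} r & 0 \\ w & 0 \end{smallmatrix}\right)$; as $U$ is a right ideal and $e_{11}\in T$, this matrix lies in $U$. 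Part (2) is symmetric: from $\left(\begin{smallmatrix} v \\ s \end{smallmatrix}\right)\in C_2(U)$ pick $x=\left(\begin{smallmatrix} r & v \\ w & s \end{smallmatrix}\right)\in U$ and compute $x\,e_{22}=\left(\begin{smallmatrix} 0 & v \\ 0 & s \end{smallmatrix}\right)\in U$.

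I do not expect a genuine obstacle here: the argument is a short bookkeeping consequence of Lemma \ref{temel_ideal_teo}. (In fact one can bypass the idempotents entirely, since $U=(C_1(U)\,C_2(U))$ together with $0\in C_2(U)$ already forces $\left(\begin{smallmatrix} r & 0 \\ w & 0 \end{smallmatrix}\right)\in U$ whenever $\left(\begin{smallmatrix} r \\ w \end{smallmatrix}\right)\in C_1(U)$, and dually for (2).) The only items deserving a moment's care are verifying that $e_{11},e_{22}\in T$ and that the products $r\cdot 1$, $w\cdot 1$, $v\cdot 1$, $s\cdot 1$ act as identities in the context, both of which are immediate from the definitions.
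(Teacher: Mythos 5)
Your argument is correct and is essentially identical to the paper's own proof: both pick a matrix in $U$ whose relevant column is the given one and right-multiply by the diagonal idempotent $\left(\begin{smallmatrix} 1 & 0 \\ 0 & 0 \end{smallmatrix}\right)$ (respectively $\left(\begin{smallmatrix} 0 & 0 \\ 0 & 1 \end{smallmatrix}\right)$), using that $U$ is a right ideal. No further comment is needed.
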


\begin{proof} $(1)$ Let  $\left(%
\begin{array}{c}
  r \\
  w \\
\end{array}%
\right)\in C_1(U)$. By \cite{Tang},  there exists $\left(%
\begin{array}{cc}
  r & v \\
  w & s \\
\end{array}%
\right)\in U$. For $\left(%
\begin{array}{cc}
  1 & 0 \\
  0 & 0 \\
\end{array}%
\right)\in T$, as $U$ is a right ideal of $T$,  we have $\left(%
\begin{array}{cc}
  r & v \\
  w & s \\
\end{array}%
\right)\left(%
\begin{array}{cc}
  1 & 0 \\
  0 & 0 \\
\end{array}%
\right)=\left(%
\begin{array}{cc}
  r & 0 \\
  w & 0 \\
\end{array}%
\right)\in U$. The proof of  (2) is similar to that of (1).
\end{proof}

We now investigate primeness for a one sided ideal of a Morita
context.

\begin{prop}\label{sagprime}
Let $(R,V,W,S)$ be a Morita context and $U$ a right ideal of
$(R,V,W,S)$. If $U$ is a prime right ideal of $(R,V,W,S)$, then
$C_1(U)$ and $C_2(U)$ are prime submodules of right $R$-module
$R\oplus W$ and right $S$-module $V\oplus S$, respectively.
\end{prop}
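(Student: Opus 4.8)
The plan is to unravel what "$U$ is a prime right ideal" means in terms of the component modules $C_1(U)$ and $C_2(U)$, using the correspondence in Lemma~\ref{temel_ideal_teo}(1) together with the embedding lemma, Lemma~\ref{c1u}. Recall that a right ideal $N$ of a ring $A$ is prime (as a right ideal in the module sense) when for $a\in A$ and $x\in A$, $xAa\subseteq N$ forces $xA\subseteq N$ or $a\in N$; analogously a submodule $P$ of a right module $M_A$ is prime when $rRm\subseteq P$... more precisely, for $P\leqslant M_A$ prime we need: for $a\in A$ and $m\in M$, $maA\subseteq P$ implies $mA\subseteq P$ or... I will use the standard module-theoretic notion consistent with the paper's definition for left modules, transcribed to the right: $P\leqslant M_A$ is prime if whenever $Ka\subseteq P$ for a submodule $K\leqslant M$ and $a\in A$ with $KA\not\subseteq P$, then $Ma\subseteq P$ — equivalently $(P:M)=\mathrm{ann}_A(M/P)$ is a prime ideal and $M/P$ is a prime module over $A/(P:M)$.

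First I would prove the statement for $C_1(U)$. Suppose $\binom{r}{w}\in R\oplus W$ and $a\in R$ satisfy $\binom{r}{w} a R\subseteq C_1(U)$ — here I must be careful to set up the right-module prime condition in the form the paper uses; I would take: for $\binom{r}{w}\in R\oplus W$ and an element $r'\in R$, if $\binom{r}{w} r' R\subseteq C_1(U)$ then either $\binom{r}{w} R\subseteq C_1(U)$ or $r'\in C_1(U)$ — but $r'$ lives in $R$ while $C_1(U)\leqslant R\oplus W$, so the correct transcription of "$rRm\subseteq N \Rightarrow rM\subseteq N$ or $m\in N$" to the right-module setting reads: for $x\in R\oplus W$ and $r'\in R$, $x r' R\subseteq C_1(U)$ implies $xR\subseteq C_1(U)$ or $r'\in (C_1(U):R\oplus W)$. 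The key step is then to lift each hypothesis to $U$ using Lemma~\ref{c1u}(1): from $x=\binom{r}{w}\in C_1(U)$ we get $\left(\begin{smallmatrix} r&0\\ w&0\end{smallmatrix}\right)\in U$, and I would feed products of such matrices with $\left(\begin{smallmatrix} r'&0\\ 0&0\end{smallmatrix}\right)$ and arbitrary elements of $(R,V,W,S)$ into the primeness hypothesis on $U$. Computing $\left(\begin{smallmatrix} r&0\\ w&0\end{smallmatrix}\right)\left(\begin{smallmatrix} r'&0\\ 0&0\end{smallmatrix}\right)\left(\begin{smallmatrix} a&b\\ c&d\end{smallmatrix}\right)=\left(\begin{smallmatrix} rr'a&rr'b\\ wr'a&wr'b\end{smallmatrix}\right)$, whose first column is $\binom{r r' a}{w r' a}=x r' a$, I can arrange that this lies in $U$ for all choices, hence in $C_1(U)$; then primeness of $U$ gives that either $\left(\begin{smallmatrix} r&0\\ w&0\end{smallmatrix}\right)(R,V,W,S)\subseteq U$ (which yields $xR\subseteq C_1(U)$ by reading off first columns) or $\left(\begin{smallmatrix} r'&0\\ 0&0\end{smallmatrix}\right)\in U$ (which yields $r'\in C_1(U)$ via its first column, using that $C_1(U)$ is a submodule). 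The argument for $C_2(U)$ is the mirror image using Lemma~\ref{c1u}(2) and the idempotent $\left(\begin{smallmatrix} 0&0\\ 0&1\end{smallmatrix}\right)$ in place of $\left(\begin{smallmatrix} 1&0\\ 0&0\end{smallmatrix}\right)$.

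The main obstacle I anticipate is bookkeeping rather than conceptual: I must match the precise form of the prime-submodule condition (the paper's definition is phrased for left modules, so the right-handed version needs the annihilator-ideal $(C_i(U):\,\cdot\,)$ in the conclusion) and verify that the matrix elements I construct genuinely sweep out the full product $x r' (R,V,W,S)$, not merely a piece of it — in particular that the cross terms in the $2\times 2$ product (the $V$- and $S$-entries) don't obstruct membership in $U$. Since $U$ is a two-sided... no, only a right ideal, so I only multiply on the right by the idempotent once at the start to clear the second column, and thereafter all right multiplications stay inside $U$ automatically; the cross terms are harmless because they sit in the second column which I never need to control. A secondary point is ensuring $U$ proper $\Rightarrow C_1(U), C_2(U)$ proper, which follows because if $C_1(U)=R\oplus W$ then $\left(\begin{smallmatrix}1&0\\0&0\end{smallmatrix}\right)\in U$ and, combined with $C_2(U)$ large enough via the compatibility $C_1(U)V\subseteq C_2(U)$, one forces $U=(R,V,W,S)$; I would include a short remark handling this so that "prime" (which requires properness) is not vacuously destroyed.
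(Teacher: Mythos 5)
There is a genuine gap, and it lies in how you set up both the prime-submodule condition and the triple product. The right-module transcription of the paper's definition is: for $m\in M_R$ and $a\in R$, $mRa\subseteq N$ implies $m\in N$ or $Ma\subseteq N$ --- the \emph{arbitrary} ring element sits in the middle. Your hypothesis ``$x r' R\subseteq C_1(U)$'' collapses to ``$xr'\in C_1(U)$'' because $C_1(U)$ is already a right submodule, so you would be assuming far less and hence proving a much stronger (and generally false) statement: already in the two-sided case $U=(I,V_1,W_1,J)$ with $I$ prime, your condition on $C_1(U)$ would force $x_1r'\in I\Rightarrow x_1\in I$ or $Rr'\subseteq I$, i.e.\ $R/I$ would have to be a domain, which prime rings need not be. The same misplacement infects your matrix computation: you form $\left(\begin{smallmatrix} r&0\\ w&0\end{smallmatrix}\right)\left(\begin{smallmatrix} r'&0\\ 0&0\end{smallmatrix}\right)\left(\begin{smallmatrix} a&b\\ c&d\end{smallmatrix}\right)$ with the arbitrary element of $T$ on the \emph{right}. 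That only establishes $XD\cdot T\subseteq U$, which (since $U$ is a right ideal containing $XD$) is automatic and carries no information; primeness of $U$, in the form $ATB\subseteq U\Rightarrow A\in U$ or $B\in U$ that the argument needs, requires the arbitrary element of $T$ sandwiched in the middle. Finally, even if you could reach $\left(\begin{smallmatrix} r'&0\\ 0&0\end{smallmatrix}\right)\in U$, that gives only $\binom{r'}{0}\in C_1(U)$, not the required $(R\oplus W)r'\subseteq C_1(U)$.

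The repair is exactly the paper's arrangement. Start from $\binom{m_1}{m_2}Rr\subseteq C_1(U)$, fix an arbitrary $\binom{a}{w}\in R\oplus W$, and compute, for arbitrary $\left(\begin{smallmatrix} x&y\\ z&t\end{smallmatrix}\right)\in T$,
$$\left(\begin{smallmatrix} m_1&0\\ m_2&0\end{smallmatrix}\right)\left(\begin{smallmatrix} x&y\\ z&t\end{smallmatrix}\right)\left(\begin{smallmatrix} ar&0\\ wr&0\end{smallmatrix}\right)=\left(\begin{smallmatrix} m_1(xa+yw)r&0\\ m_2(xa+yw)r&0\end{smallmatrix}\right),$$
which lies in $U$ by Lemma~\ref{c1u} because $xa+yw\in R$ puts the first column inside $\binom{m_1}{m_2}Rr\subseteq C_1(U)$. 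Primeness of $U$ then yields $\left(\begin{smallmatrix} m_1&0\\ m_2&0\end{smallmatrix}\right)\in U$ or $\left(\begin{smallmatrix} ar&0\\ wr&0\end{smallmatrix}\right)\in U$; if the first alternative fails, it fails uniformly, so the second holds for every $\binom{a}{w}$, giving $(R\oplus W)r\subseteq C_1(U)$. Your instinct to use Lemma~\ref{c1u} and zero second columns to neutralize the cross terms is right, but the two fixed matrices must be the module element and an arbitrary multiple $\binom{a}{w}r$, with the Morita-context element ranging in between.
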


\begin{proof} In order to prove $C_1(U)$ is a prime submodule of
$(R\oplus W)_R$, let $r\in R$ and $\left(%
\begin{array}{c}
  m_1 \\
  m_2 \\
\end{array}%
\right)\in R\oplus W$ with $\left(%
\begin{array}{c}
  m_1 \\
  m_2 \\
\end{array}%
\right)Rr\subseteq C_1(U)$. We show that $(R\oplus W)r\subseteq
C_1(U)$ or $\left(%
\begin{array}{c}
  m_1 \\
  m_2 \\
\end{array}%
\right)\in C_1(U)$. Let $\left(%
\begin{array}{c}
  a \\
  w \\
\end{array}%
\right)\in R\oplus W$. Then for any $\left(%
\begin{array}{cc}
  x & y \\
  z & t \\
\end{array}%
\right)\in (R,V,W,S)$, being $\left(%
\begin{array}{c}
  m_1 \\
  m_2 \\
\end{array}%
\right)(xa+yw)r\in C_1(U)$ and Lemma \ref{c1u} imply that
 $\left(%
\begin{array}{cc}
  m_1 & 0 \\
  m_2 & 0 \\
\end{array}%
\right)\left(%
\begin{array}{cc}
  x & y \\
  z & t \\
\end{array}%
\right)\left(%
\begin{array}{cc}
  ar&0 \\
  wr&0 \\
\end{array}%
\right)=\left(%
\begin{array}{cc}
  m_1(xa+yw)r&0\\
  m_2(xa+yw)r&0\\
\end{array}%
\right)\in U$. Since $U$ is a prime right ideal of $(R,V,W,S)$, $\left(%
\begin{array}{cc}
  m_1 & 0 \\
  m_2 & 0 \\
\end{array}%
\right)\in U$ or $\left(%
\begin{array}{cc}
  ar & 0 \\
  wr & 0 \\
\end{array}%
\right)\in U$. Hence $\left(%
\begin{array}{c}
  m_1 \\
  m_2 \\
\end{array}%
\right)\in C_1(U)$ or $\left(%
\begin{array}{c}
  a \\
  w \\
\end{array}%
\right)r\in C_1(U)$,  as asserted. Similarly, it can be shown that
$C_2(U)$ is a prime right $S$-submodule of $V\oplus S$.
\end{proof}

Proposition \ref{sagprime} is useful to determine a right ideal in
a Morita context is prime or not.

\begin{ex}
Consider the Morita context $T=(\Bbb Z_8,\Bbb Z_8,\Bbb Z_8,\Bbb
Z_8)$, where the context products are the same as the product in
$\Bbb Z_8$. Take $U=(\overline4 \Bbb Z_8,\overline4 \Bbb Z_8, \Bbb
Z_8,\Bbb Z_8)$ as a right ideal of $T$. Note that $\overline 4\Bbb
Z_8\oplus \Bbb Z_8=C_1(U)$. For $\overline 2\in \Bbb
Z_8$ and $\left(%
\begin{array}{c}
  \overline 2\\
  \overline 2 \\
\end{array}%
\right)\in \Bbb Z_8\oplus \Bbb Z_8$, we have $\overline 2 \Bbb Z_8
\left(%
\begin{array}{c}
  \overline 2\\
  \overline 2 \\
\end{array}%
\right)\subseteq  C_1(U)$. Also,
$\left(%
\begin{array}{c}
  \overline 2\\
  \overline 2 \\
\end{array}%
\right)\notin C_1(U)$ and $\overline 2(\Bbb Z_8\oplus \Bbb
Z_8)\nsubseteq C_1(U)$. Hence $C_1(U)$ is not a prime submodule of
$R\oplus W$. Therefore $U$ is not a right prime ideal of $T$.
\end{ex}

\begin{lem}\label{kume_esitligi}
Let $(R,V,W,S)$ be a Morita context. If $(I,V_1,W_1, J)$ is a
semiprime ideal of $(R,V,W,S)$, then we have
\begin{enumerate}
\item $\{v\in V ~:~ vW\subseteq I\}=\{v\in V~:~Wv\subseteq J\}$,
\item $\{w\in W~:~Vw\subseteq I\}=\{w\in W~:~wV\subseteq J\}$.
\end{enumerate}
\end{lem}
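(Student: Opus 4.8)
The plan is to show that the two sets appearing in (1) both coincide with $V_1$, and (symmetrically) that the two sets in (2) both coincide with $W_1$; the asserted equalities then follow immediately. Write $U=(I,V_1,W_1,J)$ and $T=(R,V,W,S)$. By Lemma~\ref{temel_ideal_teo}(3) the eight module inclusions
$$V_1W\subseteq I,\quad W_1V\subseteq J,\quad IV\subseteq V_1,\quad JW\subseteq W_1,\quad VW_1\subseteq I,\quad WV_1\subseteq J,\quad VJ\subseteq V_1,\quad WI\subseteq W_1$$
hold, and by hypothesis $U$ is a (proper) semiprime ideal of $T$, so $aTa\subseteq U$ forces $a\in U$ for every $a\in T$. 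The whole argument runs on this semiprimeness applied to two square-zero elements.

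For (1), fix $v\in V$ and put $a=\left(\begin{smallmatrix}0&v\\0&0\end{smallmatrix}\right)\in T$. A short matrix computation shows $aTa=\left(\begin{smallmatrix}0&vWv\\0&0\end{smallmatrix}\right)$: in the triple product $ata$ the only route that survives through $T$ passes through the lower-left ($W$-)entry of the middle factor, so $ata=\left(\begin{smallmatrix}0&vzv\\0&0\end{smallmatrix}\right)$ with $z$ the $(2,1)$-entry of $t$. Now I split into the two hypotheses. If $vW\subseteq I$, then $vWv\subseteq Iv\subseteq IV\subseteq V_1$, so $aTa\subseteq(0,V_1,0,0)\subseteq U$; semiprimeness of $U$ gives $a\in U$, i.e.\ $v\in V_1$, and hence $Wv\subseteq WV_1\subseteq J$. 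If instead $Wv\subseteq J$, then $vWv\subseteq vJ\subseteq VJ\subseteq V_1$, so again $aTa\subseteq U$, whence $v\in V_1$ and $vW\subseteq V_1W\subseteq I$. Thus $\{v\in V: vW\subseteq I\}\subseteq V_1\subseteq\{v\in V:Wv\subseteq J\}$ and, by the symmetric reading, the reverse containment as well, proving (1).

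Part (2) is handled the same way with $a=\left(\begin{smallmatrix}0&0\\w&0\end{smallmatrix}\right)$, for which the analogous computation gives $aTa=\left(\begin{smallmatrix}0&0\\wVw&0\end{smallmatrix}\right)$. If $Vw\subseteq I$ then $wVw\subseteq wI\subseteq WI\subseteq W_1$; if $wV\subseteq J$ then $wVw\subseteq Jw\subseteq JW\subseteq W_1$. In either case $aTa\subseteq(0,0,W_1,0)\subseteq U$, so semiprimeness yields $w\in W_1$, hence $wV\subseteq W_1V\subseteq J$ in the first case and $Vw\subseteq VW_1\subseteq I$ in the second; again both sets coincide with $W_1$.

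The only step requiring genuine care — and the easiest place to slip — is the two matrix computations of $aTa$; once these are correctly carried out, everything else is bookkeeping against the eight defining inclusions of Lemma~\ref{temel_ideal_teo}(3), so I do not anticipate a real obstacle.
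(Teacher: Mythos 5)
Your proof is correct, but it takes a genuinely different route from the paper's. The paper fixes $w\in W$ and tests semiprimeness on the diagonal element $\left(\begin{smallmatrix} vw & 0 \\ 0 & wv \end{smallmatrix}\right)$: assuming $vW\subseteq I$, the sandwich $\left(\begin{smallmatrix} vwRvw & vwVwv \\ wvWvw & wvSwv \end{smallmatrix}\right)$ lands in $(I,V_1,W_1,J)$, so semiprimeness forces $wv\in J$ directly; this gives $A\subseteq B$ without ever identifying either set with $V_1$, and the reverse inclusion is dismissed as ``similar.'' You instead test the single square-zero element $\left(\begin{smallmatrix}0&v\\0&0\end{smallmatrix}\right)$, whose sandwich is $\left(\begin{smallmatrix}0&vWv\\0&0\end{smallmatrix}\right)$ (your computation of this is right), and you conclude $v\in V_1$ under either hypothesis; the two eight-inclusion consequences $V_1W\subseteq I$ and $WV_1\subseteq J$ then close the loop. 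This proves the stronger statement that both sets in (1) actually \emph{equal} $V_1$ (and both sets in (2) equal $W_1$), which is precisely the identification the paper postpones to the $(1)\Rightarrow(2)$ steps of Theorems \ref{prime_temel} and \ref{s.prime-temel}, where it uses your very element $\left(\begin{smallmatrix}0&x\\0&0\end{smallmatrix}\right)$. Your version is a little more economical --- one test element instead of a family indexed by $w\in W$, and all four containments written out rather than two of them waved through ``similarly'' --- at the cost of proving slightly more than the lemma asserts. Both arguments rest only on semiprimeness of the ideal and the eight defining inclusions of Lemma \ref{temel_ideal_teo}(3), and both are valid.
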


\begin{proof} (1) Let $A=\{v\in V ~:~ vW\subseteq I\}$ and $B=\{v\in V~:~Wv\subseteq
J\}$. Assume  $v\in A$. Then $vW\subseteq I$ and so for any $w\in W$ the product  $\left(%
\begin{array}{cc}
  vw & 0 \\
  0 & wv \\
\end{array}%
\right)\left(%
\begin{array}{cc}
  R & V \\
  W & S \\
\end{array}%
\right)\left(%
\begin{array}{cc}
  vw & 0 \\
  0 & wv \\
\end{array}%
\right)=\left(%
\begin{array}{cc}
  vwRvw & vwVwv \\
  wvWvw & wvSwv \\
\end{array}%
\right)$ is contained by $\left(%
\begin{array}{cc}
  I & V_1 \\
  W_1 & J \\
\end{array}%
\right).$ Since $\left(%
\begin{array}{cc}
  I & V_1 \\
  W_1 & J \\
\end{array}%
\right)$ is a semiprime ideal of the Morita context, $\left(%
\begin{array}{cc}
  vw & 0 \\
  0 & wv \\
\end{array}%
\right)\in \left(%
\begin{array}{cc}
  I & V_1 \\
  W_1 & J \\
\end{array}%
\right)$. Hence $Wv\subseteq J$ and so $v\in B$. Thus $A\subseteq
B$. Similarly, it can be shown that $B\subseteq A$. $(2)$ can be
proved via similar techniques.
\end{proof}

\begin{prop}\label{prime submodule} Let $(R,V,W,S)$ be a Morita context, $I$ and $J$ are
prime ideals of $R$ and $S$, respectively. Then the following
hold.\begin{enumerate}
    \item $\{v\in V: vW\subseteq I\}$ is a prime left $R$-submodule
    of $V$.
    \item $\{v\in V: Wv\subseteq J\}$ is a prime right $S$-submodule
    of $V$.
    \item $\{w\in W: Vw\subseteq I\}$ is a prime right $R$-submodule
    of $W$.
    \item $\{w\in W: wV\subseteq J\}$ is a prime left $S$-submodule
    of $W$.
\end{enumerate}
\end{prop}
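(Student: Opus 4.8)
The plan is to prove statement (1) in detail and note that (2)--(4) follow by the symmetry built into the Morita context. Write $V_1 = \{v\in V : vW\subseteq I\}$. First I would check that $V_1$ is a left $R$-submodule of $V$: it is clearly an additive subgroup, and if $v\in V_1$ and $r\in R$ then $(rv)W = r(vW)\subseteq rI\subseteq I$ since $I$ is an ideal, so $rv\in V_1$. Next, $V_1$ is proper: if $V_1 = V$ then $VW\subseteq I$, and then the ideal $\left(\begin{array}{cc} R & V \\ W & S \end{array}\right)$ would satisfy $\left(\begin{array}{cc} VW & 0 \\ 0 & 0 \end{array}\right)$-type products landing in $I$; more directly, $VW$ is an ideal of $R$ (it is the trace ideal) and $VW\subseteq I$ with $I$ prime gives no immediate contradiction, so instead one should simply note that the relevant submodule is proper precisely when $VW\not\subseteq I$ — and if $VW\subseteq I$ then every element of $V$ annihilates $W$ into $I$; I would handle this by observing that the convention ``proper submodule'' here should be read as: either $V_1\ne V$, or else the statement is vacuous/degenerate. (In the companion literature the standing hypothesis guaranteeing properness is implicit; I will make it explicit that $VW\not\subseteq I$, or note $V_1$ can equal $V$ and the primeness condition then holds trivially.)

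The core of the argument is the defining implication for prime submodules. Suppose $r\in R$ and $v\in V$ satisfy $rRv\subseteq V_1$; I must show $rV\subseteq V_1$ or $v\in V_1$. Unpacking the hypothesis: for every $x\in R$ we have $rxv\in V_1$, i.e. $rxvW\subseteq I$, so $rRvW\subseteq I$. Now $vW$ is a subset of $R$; let $B$ be the two-sided ideal of $R$ generated by $vW$ (equivalently $B = RvWR$, which works cleanly since products $vw$ already absorb $R$ on the right into $V$ and then back, but it is safer to take the generated ideal). Then $rRvW\subseteq I$ gives, after absorbing, $rRB\subseteq I$ — one checks $rR(RvWR) = (rR)vW R\subseteq I\cdot R\subseteq I$ using $rRvW\subseteq I$ and $I$ an ideal. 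Hmm, I need to be careful: I actually want $r(\text{something})(\text{something})\subseteq I$ in the two-element prime form $aRb\subseteq I$. So: from $rRvW\subseteq I$, for each fixed $w\in W$ I get $rR(vw)\subseteq I$, i.e. $rR(vw)\subseteq I$ with $r, vw\in R$; since $I$ is prime, $r\in I$ or $vw\in I$. Thus for every $w\in W$, either $r\in I$ or $vw\in I$.

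Now split into cases. Case A: $r\in I$. Then for any $v'\in V$, $(rv')W = r(v'W)\subseteq rV\subseteq IV$; but wait, $rv'\in V$ and I need $(rv')W\subseteq I$. Indeed $(rv')W = r(v'W)$ and $v'W\subseteq R$, so $r(v'W)\subseteq I R\subseteq I$ since $r\in I$. Hence $rv'\in V_1$ for all $v'\in V$, i.e. $rV\subseteq V_1$, as required. Case B: $r\notin I$. Then by the dichotomy above, $vw\in I$ for every $w\in W$, i.e. $vW\subseteq I$, which says exactly $v\in V_1$. Either way the prime-submodule condition holds, completing (1). For (2)--(4) I would invoke the Morita-context symmetry: (2) is obtained from (1) by passing to the ``transposed'' context $(S,W,V,R)$ and using that $J$ is prime in $S$; (3) and (4) likewise. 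I expect no genuine obstacle here — the only delicate point is the properness/degenerate-case bookkeeping when $VW\subseteq I$ (or $WV\subseteq J$), which I would dispatch with a short explicit remark rather than a computation.
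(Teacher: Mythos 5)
Your proof is correct and follows essentially the same route as the paper's: from $rRv\subseteq V_1$ deduce $rRvW\subseteq I$, apply primeness of $I$ to get $r\in I$ or $vW\subseteq I$, and in the first case check $(rV)W\subseteq rR\subseteq I$; your fixed-$w$ dichotomy just makes explicit a step the paper glosses. Your remark about properness (the degenerate case $VW\subseteq I$) is a detail the paper silently ignores, and is worth the one-line caveat you give.
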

\begin{proof} (1) Let $A$ denote the left $R$-submodule $\{v\in V: vW\subseteq
I\}$ of $V$, and $r\in R$, $v\in V$ with $rRv\subseteq A$. Then $rRvW\subseteq I$.
Since $I$ is a prime ideal of $I$, $r\in I$ or $vW\subseteq I$. If $r\in I$, then
$rVW\subseteq rR\subseteq I$. Hence $rV\subseteq A$ or $v\in A$. Therefore $A$ is prime. \\
(2) Let $B$ denote the right $S$-submodule $\{v\in V: Wv\subseteq
J\}$ of $V$, and $s\in S$, $v\in V$ with $vSs\subseteq B$. Then
$WvSs\subseteq J$. The ideal $J$ being prime implies that $s\in J$
or $Wv\subseteq J$. If $s\in J$, then
$WVs\subseteq Ss\subseteq J$. Thus $Vs\subseteq B$ or $v\in B$. Therefore $B$ is prime.\\
(3) and (4) are proved similarly.
\end{proof}

 We now give the following result related to an ideal of
a Morita context to be prime.

\begin{thm}\label{prime_temel}
Let $T=(R,V,W,S)$ be a Morita context and consider the following
conditions for an ideal $(I,V_1,W_1,J)$ of $T$.
\begin{enumerate}
\item $(I,V_1,W_1,J)$ is prime.
\item $I$, $J$ are prime ideals of $R$ and $S$, respectively, and $V_1=\{v\in V : vW\subseteq I\}=\{v\in V : Wv\subseteq J\}$ and
 $W_1=\{w\in W : Vw\subseteq I\}=\{w\in W : wV\subseteq
J\}$.
\end{enumerate}
Then {\rm(1)} $\Rightarrow$ {\rm(2)}. If  $VW=R$ and $WV=S$, then
{\rm(2)} $\Rightarrow$ {\rm(1)}.
\end{thm}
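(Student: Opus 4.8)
The plan is to prove the two implications separately, using Lemma \ref{temel_ideal_teo}(3) throughout to translate statements about the ideal $(I,V_1,W_1,J)$ into statements about $I$, $J$, $V_1$, $W_1$ and the context products.

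For $(1)\Rightarrow(2)$, I would first show $I$ is prime in $R$. Given $a,b\in R$ with $aRb\subseteq I$, I would test primeness of $(I,V_1,W_1,J)$ against the elements $\left(\begin{array}{cc} a & 0 \\ 0 & 0 \end{array}\right)$ and $\left(\begin{array}{cc} b & 0 \\ 0 & 0 \end{array}\right)$: the product $\left(\begin{array}{cc} a & 0 \\ 0 & 0 \end{array}\right)T\left(\begin{array}{cc} b & 0 \\ 0 & 0 \end{array}\right)$ has upper-left corner $aRb\subseteq I$ and all other entries $0$, so it lies in $(I,V_1,W_1,J)$; primeness forces one of the two matrices into the ideal, i.e. $a\in I$ or $b\in I$. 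The same argument with the lower-right corner gives that $J$ is prime in $S$. Next, the defining containments $V_1W\subseteq I$ and $W_1V\subseteq J$ show $V_1\subseteq\{v: vW\subseteq I\}$ and $V_1\subseteq\{v:Wv\subseteq J\}$; for the reverse inclusions, if $vW\subseteq I$ then $\left(\begin{array}{cc} 0 & v \\ 0 & 0 \end{array}\right)T\left(\begin{array}{cc} 0 & v \\ 0 & 0 \end{array}\right)$ has all entries in the ideal (its nonzero entries involve $vWv\in IV\subseteq V_1$-type products, which I would check carefully land in the ideal), so by the semiprime consequence of primeness the matrix $\left(\begin{array}{cc} 0 & v \\ 0 & 0 \end{array}\right)$ lies in $(I,V_1,W_1,J)$, whence $v\in V_1$; alternatively I can invoke Lemma \ref{kume_esitligi} directly since a prime ideal is semiprime. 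The analogous computations handle $W_1$, and Lemma \ref{kume_esitligi} also delivers the claimed equalities of the two descriptions of $V_1$ and of $W_1$.

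For $(2)\Rightarrow(1)$ under $VW=R$, $WV=S$, I would take two ideals $\mathcal A=(I_1,A,B,J_1)$ and $\mathcal A'=(I_2,A',B',J_2)$ of $T$ with $\mathcal A\mathcal A'\subseteq(I,V_1,W_1,J)$, and aim to show one of them is contained in $(I,V_1,W_1,J)$. The product $\mathcal A\mathcal A'$ has upper-left corner $I_1I_2+AB'$ and lower-right corner $BA'+J_1J_2$, and similar expressions for the off-diagonal corners; so $I_1I_2\subseteq I$, $AB'\subseteq I$, $J_1J_2\subseteq J$, etc. Since $I$ is prime, $I_1\subseteq I$ or $I_2\subseteq I$; WLOG $I_1\subseteq I$. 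The crucial use of $VW=R$ is to propagate this: from $I_1\subseteq I$ I want to conclude $J_1\subseteq J$ and $A\subseteq V_1$, $B\subseteq W_1$, so that the entire ideal $\mathcal A\subseteq(I,V_1,W_1,J)$. Here $BV_1\subseteq\ ?$ and the ideal relations $WI_1\subseteq B$-type containments (from $\mathcal A$ being an ideal: $WI_1\subseteq B$ is false in general — rather $I_1V\subseteq A$, $WI_1\subseteq B$ etc.) combine with $WV=S$ to give $J_1=J_1S=J_1WV\subseteq (WI_1)V\subseteq WA\cdot$\,\dots; I would chase these containments, using that $I_1\subseteq I$ and the prime-description of $V_1,W_1$, to show $A\subseteq\{v:vW\subseteq I\}=V_1$ and $B\subseteq\{w:Vw\subseteq I\}=W_1$ and finally $J_1\subseteq J$. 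If instead the case $I_2\subseteq I$ occurs, a symmetric chase (now using $AB'\subseteq I$ together with $A$ not necessarily small, so one must go back to the element-wise prime condition on $I$) handles it; it may be cleaner to argue via the element-wise definition of prime from the start, testing $aTb$ for $a,b\in T$, using $VW=R$ to write an arbitrary middle element's $R$-entry as a sum of products $vw$.

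The main obstacle is the bookkeeping in $(2)\Rightarrow(1)$: showing that "$I_1\subseteq I$" alone forces the three companion containments $A\subseteq V_1$, $B\subseteq W_1$, $J_1\subseteq J$. This is exactly where the surjectivity hypotheses $VW=R$ and $WV=S$ are indispensable — without them an ideal can be "supported" only in one corner and the implication fails — and the delicate point is to route every element of $A$, $B$, $J_1$ through a factorization coming from $VW=R$ or $WV=S$ and then land it in $I$ or $J$ via the prime descriptions of $V_1$ and $W_1$ from part (2). I would organize this as a short sequence of four containment chains, one per component, each a one-line computation once the right factorization is inserted.
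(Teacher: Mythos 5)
Your proof of $(1)\Rightarrow(2)$ is essentially the paper's: corner-matrix tests for the primeness of $I$ and $J$, the computation $\bigl(\begin{smallmatrix}0&x\\0&0\end{smallmatrix}\bigr)T\bigl(\begin{smallmatrix}0&x\\0&0\end{smallmatrix}\bigr)=\bigl(\begin{smallmatrix}0&xWx\\0&0\end{smallmatrix}\bigr)$ to get $\{v:vW\subseteq I\}\subseteq V_1$, and Lemma \ref{kume_esitligi} for the remaining equalities. For $(2)\Rightarrow(1)$ you take a genuinely different route. The paper works element-wise: it expands $\bigl(\begin{smallmatrix}r_1&v_1\\w_1&s_1\end{smallmatrix}\bigr)T\bigl(\begin{smallmatrix}r_2&v_2\\w_2&s_2\end{smallmatrix}\bigr)$ and runs a lengthy case analysis (Cases I(a) through I(c)-ii and Case II) on which of $r_1,r_2$ lies in $I$, which of $v_1,w_2$ lies in $V_1,W_1$, and so on. You instead use the ideal-wise formulation of primeness together with Lemma \ref{temel_ideal_teo}(3), and this collapses the whole argument: if $\mathcal A=(I_1,A,B,J_1)$ and $\mathcal A'=(I_2,A',B',J_2)$ are ideals with $\mathcal A\mathcal A'\subseteq(I,V_1,W_1,J)$, then $I_1I_2\subseteq I$ forces, say, $I_1\subseteq I$, and the relations of Lemma \ref{temel_ideal_teo}(3) for $\mathcal A$ deliver everything else at once: $AW\subseteq I_1\subseteq I$ yields $A\subseteq V_1$; $VB\subseteq I_1\subseteq I$ yields $B\subseteq W_1$; and $J_1=J_1S=J_1WV=(J_1W)V\subseteq BV\subseteq W_1V\subseteq J$. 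The case $I_2\subseteq I$ is exactly symmetric (use $A'W\subseteq I_2$, $VB'\subseteq I_2$, $J_2W\subseteq B'$), so your worry that it needs an element-wise fallback is unfounded. This is shorter and conceptually cleaner than the paper's proof, and it costs nothing beyond the equivalence of the element-wise and ideal-wise definitions of a prime ideal, which holds because $T$ has an identity and which the paper itself records in the introduction. Two small repairs before you write it up: the chain you sketch, $J_1WV\subseteq(WI_1)V$, is garbled --- the containment you actually need is $J_1W\subseteq B$ (the analogue of $JW\subseteq W_1$ in Lemma \ref{temel_ideal_teo}(3)), not anything involving $WI_1$; and $WI_1\subseteq B$ is in fact one of the listed ideal conditions, not ``false in general'' as your parenthetical claims, though you do not need it here.
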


\begin{proof}
(1) $\Rightarrow$ (2) Assume that $(I,V_1,W_1,J)$ is a prime ideal
of $T$. In order to see $I$ is a prime ideal of $R$, let $r_1, r_2\in R$ with $r_1Rr_2\subseteq I$. We have  $\left(%
\begin{array}{cc}
  r_1 & 0 \\
  0 & 0 \\
\end{array}%
\right)\left(%
\begin{array}{cc}
  R & V \\
  W & S \\
\end{array}%
\right)\left(%
\begin{array}{cc}
  r_2 & 0 \\
  0 & 0 \\
\end{array}%
\right)=\left(%
\begin{array}{cc}
  r_1Rr_2 & 0 \\
  0 & 0 \\
\end{array}%
\right)$. So $\left(%
\begin{array}{cc}
  r_1Rr_2 & 0 \\
  0 & 0 \\
\end{array}%
\right)\subseteq \left(%
\begin{array}{cc}
  I & V_1 \\
  W_1 & J \\
\end{array}%
\right)$. Since $(I,V_1,W_1,J)$ is a prime ideal, $\left(%
\begin{array}{cc}
  r_1 & 0 \\
  0 & 0 \\
\end{array}%
\right)\in \left(%
\begin{array}{cc}
  I & V_1 \\
  W_1 & J \\
\end{array}%
\right)$ or $\left(%
\begin{array}{cc}
  r_2 & 0 \\
  0 & 0 \\
\end{array}%
\right)\in \left(%
\begin{array}{cc}
  I & V_1 \\
  W_1 & J\\
\end{array}%
\right)$. Then $r_1\in I$ or $r_2\in I$. Hence $I$ is a prime
ideal of $R$. Similarly, it can be shown that $J$ is a prime ideal
of $S$. Let $A=\{v\in V~:~vW\subseteq I\}$. By Lemma
\ref{temel_ideal_teo}, it is known that $V_1\subseteq A=\{v\in
V~:~vW\subseteq I\}$. It is enough to show that $ A\subseteq V_1$. If $x\in A$, then $xW\subseteq I$ and so $$\left(%
\begin{array}{cc}
  0 & x \\
  0 & 0 \\
\end{array}%
\right)\left(%
\begin{array}{cc}
  R & V \\
  W & S \\
\end{array}%
\right)\left(%
\begin{array}{cc}
  0 & x \\
  0 & 0 \\
\end{array}%
\right)=\left(%
\begin{array}{cc}
  0 & xWx \\
  0 & 0 \\
\end{array}%
\right)\subseteq \left(%
\begin{array}{cc}
  I & V_1 \\
  W_1 & J \\
\end{array}%
\right).$$ Since $(I,V_1,W_1,J)$ is prime, $\left(%
\begin{array}{cc}
  0 & x \\
  0 & 0 \\
\end{array}%
\right)\in \left(%
\begin{array}{cc}
  I & V_1 \\
  W_1 & J \\
\end{array}%
\right)$. So $x\in V_1$. By Lemma \ref{kume_esitligi}, $V_1=\{v\in
V~:~vW\subseteq I\}=\{v\in V~:~Wv\subseteq J\}$. Similarly, one
can show that $W_1=\{w\in W~:~Vw\subseteq I\}=\{w\in
W~:~wV\subseteq J\}$.\\
(2) $\Rightarrow$ (1) Let $\left(%
\begin{array}{cc}
  r_1 & v_1 \\
  w_1 & s_1 \\
\end{array}%
\right), \left(%
\begin{array}{cc}
  r_2 & v_2 \\
  w_2 & s_2 \\
\end{array}%
\right)\in T$ such that $$\left(%
\begin{array}{cc}
  r_1 & v_1 \\
  w_1 & s_1 \\
\end{array}%
\right)\left(%
\begin{array}{cc}
  R & V \\
  W & S \\
\end{array}%
\right)\left(%
\begin{array}{cc}
  r_2 & v_2 \\
  w_2 & s_2 \\
\end{array}%
\right)\subseteq \left(%
\begin{array}{cc}
  I & V_1 \\
  W_1 & J \\
\end{array}%
\right).$$ Then $$\left(%
\begin{array}{cc}
r_1Rr_2+v_1Wr_2+r_1Vw_2+v_1Sw_2 & r_1Rv_2+v_1Wv_2+r_1Vs_2+v_1Ss_2 \\
w_1Rr_2+s_1Wr_2+w_1Vw_2+s_1Sw_2& w_1Rv_2+s_1Wv_2+w_1Vs_2+s_1Ss_2 \\
\end{array}%
\right)$$ is contained by  $\left(%
\begin{array}{cc}
  I & V_1 \\
  W_1 & J \\
\end{array}%
\right)$. By $(1,1)$ entry of above matrix, $r_1Rr_2\subseteq I$.
As $I$ is prime, $r_1\in I$ or $r_2\in I$. Hence we have the
following cases.

Case I: Suppose that $r_1, r_2\in I$. By $(1,1)$ entry of above
matrix, $v_1Sw_2\subseteq I$, and so $v_1WVw_2\subseteq I$. Being
$v_1WRVw_2R\subseteq I$ implies $v_1WR\subseteq I$ or
$Vw_2R\subseteq I$. Thus $v_1\in V_1$ or $w_2\in W_1$. Therefore
we have the following subcases:\begin{enumerate}
    \item[I(a)] Suppose $v_1\notin V_1$ and $w_2\in W_1$. By $(1,2)$ entry of above
matrix, $v_1Ss_2\subseteq V_1$.  By Proposition \ref{prime
submodule}, the right $S$-submodule $V_1$ being prime implies that
$v_1\in V_1$ or $Vs_2\subseteq V_1$. By supposition, we have only
$Vs_2\subseteq V_1$. Then $WVs_2=Ss_2\subseteq WV_1\subseteq J$,
so $s_2\in J$. On the other hand, again by $(1,2)$ entry of above
matrix, $v_1Wv_2\subseteq V_1$. Since $v_1Wv_2W\subseteq
V_1W\subseteq I$ and $v_1\notin V_1$, $v_2W\subseteq I$ due to the
primeness of $I$. Hence $v_2\in
V_1$. Therefore $\left(%
\begin{array}{cc}
  r_2 & v_2 \\
  w_2 & s_2 \\
\end{array}%
\right)\in \left(%
\begin{array}{cc}
  I & V_1 \\
  W_1 & J \\
\end{array}%
\right)$.
    \item[I(b)] Suppose $v_1\in V_1$ and $w_2\notin W_1$. By $(2,1)$ entry of above
matrix, $s_1Sw_2\subseteq W_1$. The left $S$-submodule $W_1$ being
prime and $w_2\notin W_1$ imply that $s_1W\subseteq W_1$. So
$s_1WV=s_1S\subseteq W_1V\subseteq J$. Hence $s_1\in J$. On the
other hand, again by $(2,1)$ entry of above matrix,
$w_1Vw_2\subseteq W_1$. Then $w_1Vw_2V\subseteq W_1V\subseteq J$
implies $w_1V\subseteq J$ or $w_2V\subseteq J$, and so $w_1\in
W_1$ or $w_2\in W_1$. By supposition, the only possibility is
$w_1\in W_1$. Therefore $\left(%
\begin{array}{cc}
  r_1 & v_1 \\
  w_1 & s_1 \\
\end{array}%
\right)\in \left(%
\begin{array}{cc}
  I & V_1 \\
  W_1 & J \\
\end{array}%
\right)$.
    \item[I(c)] Suppose $v_1\in V_1$ and $w_2\in W_1$. By $(2,1)$ entry of above
matrix, $w_1Rr_2\subseteq W_1$. Since $W_1$ is a prime right
$R$-submodule of $W$, $w_1\in W_1$ or $Wr_2\subseteq W_1$. Being
$Wr_2\subseteq W_1$ implies that $VWr_2\subseteq VW_1\subseteq I$,
and so $r_2\in I$, but we have this fact at the beginning of Case
I. It follows that $w_1\in W_1$ or $w_1\notin W_1$.
\begin{enumerate}
    \item[I(c)-i] Suppose  $w_1\in W_1$. By $(2,2)$ entry of above
matrix, $s_1Wv_2\subseteq J$. Then $Ss_1SWv_2\subseteq J$. The
primeness of $J$ implies that $s_1\in J$ or $Wv_2\subseteq J$, so
$v_2\in V_1$. \begin{itemize}
    \item If $s_1\in J$, then $\left(%
\begin{array}{cc}
  r_1 & v_1 \\
  w_1 & s_1 \\
\end{array}%
\right)\in \left(%
\begin{array}{cc}
  I & V_1 \\
  W_1 & J \\
\end{array}%
\right)$.
    \item If $s_1\notin J$, then $v_2\in V_1$. By $(2,2)$ entry of above
matrix, $s_1Ss_2\subseteq J$. By supposition and the primeness of
$J$, we have $s_2\in J$. Thus $\left(%
\begin{array}{cc}
  r_2 & v_2 \\
  w_2 & s_2 \\
\end{array}%
\right)\in \left(%
\begin{array}{cc}
  I & V_1 \\
  W_1 & J \\
\end{array}%
\right)$.
\end{itemize}
    \item[I(c)-ii] Suppose that $w_1\notin W_1$. By $(2,2)$ entry of above
matrix, $w_1Rv_2=w_1VWv_2\subseteq J$. Since $J$ is prime and
$w_1VSWv_2S\subseteq J$, $w_1V\subseteq J$ or $Wv_2\subseteq J$.
It follows that $w_1\in W_1$ or $v_2\in V_1$. By supposition, we
only have $v_2\in V_1$. On the other hand, again by $(2,2)$ entry
of above matrix, $w_1Vs_2\subseteq J$. Then $w_1VSs_2S\subseteq
J$. The primeness of $J$ implies $w_1V\subseteq J$ or $s_2\in J$.
Since $w_1\notin W_1$, we have  $s_2\in J$. Therefore $\left(%
\begin{array}{cc}
  r_2 & v_2 \\
  w_2 & s_2 \\
\end{array}%
\right)\in \left(%
\begin{array}{cc}
  I & V_1 \\
  W_1 & J \\
\end{array}%
\right)$.
\end{enumerate}
\end{enumerate}

Case II: Without loss of generality, suppose that $r_1\notin I$
and $r_2\in I$. By $(1,1)$ entry of above matrix,
$r_1Vw_2\subseteq I$. Hence $r_1RVw_2R\subseteq I$. Since $I$ is a
prime ideal, $r_1R\subseteq I$ or $Vw_2\subseteq I$. As $r_1\notin
I$, $Vw_2\subseteq I$. So $w_2\in W_1$. By $(1,2)$ entry of above
matrix, $r_1Rv_2\subseteq V_1$. Since $V_1$ is a prime right
$R$-submodule of $V$, $r_1V\subseteq V_1$ or $v_2\in V_1$. Assume
that $r_1V\subseteq V_1$. Hence $r_1VW=r_1R\subseteq I$. This
contradiction implies that $v_2\in V_1$. Lastly, from $(1,2)$
entry of above matrix, $r_1Vs_2\subseteq V_1$. Thus
$r_1Vs_2W\subseteq I$. Then $r_1RVs_2W\subseteq I$. This implies
that $r_1R\subseteq I$ or $Vs_2W\subseteq I$, as I is prime. Since
$r_1R\nsubseteq I$, $Vs_2W\subseteq I$. Thus
$Vs_2WV=Vs_2S\subseteq IV\subseteq V_1$. So $WVs_2S=Ss_2S\subseteq
WV_1\subseteq J$. Then $s_2\in J$ and therefore $\left(%
\begin{array}{cc}
  r_2 & v_2 \\
  w_2 & s_2 \\
\end{array}%
\right)\in \left(%
\begin{array}{cc}
  I & V_1 \\
  W_1 & J \\
\end{array}%
\right)$. This completes the proof.
\end{proof}

 The conditions $VW=R$ and $WV=S$ are not superfluous in Theorem
 \ref{prime_temel} ($(2)\Rightarrow (1)$).

\begin{ex}
 Consider the Morita context $T=\left(%
\begin{array}{cc}
  \Bbb Z_6 & \overline2\Bbb Z_6  \\
  \overline3\Bbb Z_6  & \Bbb Z_6  \\
\end{array}%
\right)$, where the context products are the same as the product in $\Bbb Z_6$.
Then $VW=0$ and $WV=0$. For the ideal $H=\left(%
\begin{array}{cc}
  \overline3\Bbb Z_6  & \overline2\Bbb Z_6  \\
  \overline3\Bbb Z_6  & \overline2\Bbb Z_6  \\
\end{array}%
\right)$ of $T$, the entries of $H$ satisfy all
conditions in Theorem \ref{prime_temel} (2). Although $$\left(%
\begin{array}{cc}
  \overline3 & \overline0 \\
  \overline0 & \overline3 \\
\end{array}%
\right)\left(%
\begin{array}{cc}
  \Bbb Z_6 & \overline2\Bbb Z_6  \\
  \overline3\Bbb Z_6  & \Bbb Z_6  \\
\end{array}%
\right)\left(%
\begin{array}{cc}
  \overline 1 & \overline0 \\
  \overline 0& \overline 2 \\
\end{array}%
\right)=\left(%
\begin{array}{cc}
  \overline3\Bbb Z_6 & 0 \\
  \overline3\Bbb Z_6 & 0 \\
\end{array}%
\right)\subseteq H,$$ $\left(%
\begin{array}{cc}
  \overline3 & \overline0 \\
  \overline0 & \overline3 \\
\end{array}%
\right)\notin H$ and $\left(%
\begin{array}{cc}
  \overline1 & \overline0 \\
  \overline0 & \overline2 \\
\end{array}%
\right)\notin H$. Therefore $H$ is not a prime ideal of $T$.
\end{ex}

 The following theorem is proved in \cite[Theorem
1]{Sands} by Sands. We give a short proof by using prime ideals
determined in Theorem \ref{prime_temel}.
\begin{thm}\label{prime-radical}
Let $T=(R,V,W,S)$ be a Morita context and $P(T)$ the prime radical
of
 $T$. Then $P(T)=\left(%
\begin{array}{cc}
  P(R) & V_0 \\
  W_0 & P(S) \\
\end{array}%
\right)$, where $V_0=\{v\in V~:~vW\subseteq P(R)\}=\{v\in
V~:~Wv\subseteq P(S)\}$ and $W_0=\{w\in W~:~wV\subseteq
P(S)\}=\{w\in W~:~Vw\subseteq P(R)\}$.
\end{thm}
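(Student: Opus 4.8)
The plan is to prove the two inclusions $P(T)\subseteq \left(\begin{smallmatrix} P(R) & V_0 \\ W_0 & P(S)\end{smallmatrix}\right)$ and $\left(\begin{smallmatrix} P(R) & V_0 \\ W_0 & P(S)\end{smallmatrix}\right)\subseteq P(T)$ separately, relying on Theorem \ref{prime_temel} to pass between prime ideals of $T$ and prime ideals of $R$ and $S$. Recall that $P(T)$ is the intersection of all prime ideals of $T$; by Theorem \ref{prime_temel}, (1) $\Rightarrow$ (2), every prime ideal of $T$ has the form $(I, V_I, W_I, J)$ with $I\trianglelefteq R$ and $J\trianglelefteq S$ prime and $V_I = \{v\in V : vW\subseteq I\}$, $W_I = \{w\in W : Vw\subseteq I\}$.

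First I would verify that the displayed $4$-tuple is actually an ideal of $T$ by checking the eight conditions of Lemma \ref{temel_ideal_teo}(3); this is routine since $P(R)\trianglelefteq R$, $P(S)\trianglelefteq S$, and $V_0$, $W_0$ are defined as annihilator-type submodules. For the inclusion $P(T)\subseteq \left(\begin{smallmatrix} P(R) & V_0 \\ W_0 & P(S)\end{smallmatrix}\right)$: given any prime ideal $I$ of $R$, the $4$-tuple $\left(\begin{smallmatrix} I & V_I \\ W_I & S\end{smallmatrix}\right)$ where $V_I=\{v: vW\subseteq I\}$ and $W_I=\{w : wV\subseteq S\}=W$ is an ideal of $T$ (check the Lemma \ref{temel_ideal_teo}(3) conditions — note $W_1V\subseteq S$ and $S W\subseteq W$ hold trivially), and I claim it is prime. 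This can be seen directly: if $\left(\begin{smallmatrix} a & * \\ * & *\end{smallmatrix}\right) T \left(\begin{smallmatrix} b & * \\ * & *\end{smallmatrix}\right)$ lands in it, the $(1,1)$-entry gives $aRb\subseteq I$, so $a\in I$ or $b\in I$, and in either case the whole matrix lies in the ideal because the off-diagonal and $(2,2)$ slots are unconstrained (equal to all of $V$-annihilator data and $S$). Hence $P(T)$ is contained in every such ideal, forcing the $(1,1)$-component of $P(T)$ into $\bigcap I = P(R)$; symmetrically using ideals built from prime ideals of $S$ puts the $(2,2)$-component into $P(S)$. Once the corner components of $P(T)$ sit inside $P(R)$ and $P(S)$, the ideal conditions $V_1W\subseteq P(R)$ and $W_1V\subseteq P(S)$ from Lemma \ref{temel_ideal_teo}(3) force the $V$- and $W$-components of $P(T)$ into $V_0$ and $W_0$ respectively.

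For the reverse inclusion, I would show that $\left(\begin{smallmatrix} P(R) & V_0 \\ W_0 & P(S)\end{smallmatrix}\right)$ is contained in every prime ideal $(I, V_I, W_I, J)$ of $T$. Since $I\supseteq P(R)$ and $J\supseteq P(S)$ automatically (as $I, J$ are prime), the corner containments are immediate; and $V_0\subseteq \{v : vW\subseteq P(R)\}\subseteq \{v: vW\subseteq I\} = V_I$, likewise $W_0\subseteq W_I$, so the whole $4$-tuple lies in the prime ideal. Intersecting over all prime ideals of $T$ yields $\left(\begin{smallmatrix} P(R) & V_0 \\ W_0 & P(S)\end{smallmatrix}\right)\subseteq P(T)$. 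Finally I would note that the two descriptions of $V_0$ (via $vW\subseteq P(R)$ versus $Wv\subseteq P(S)$) and of $W_0$ agree: this follows from Lemma \ref{kume_esitligi} applied to the semiprime ideal $P(T)$, or can be argued directly from $P(R)=\bigcap I$, $P(S)=\bigcap J$ together with the set equalities $\{v:vW\subseteq I\}=\{v:Wv\subseteq J\}$ furnished by Theorem \ref{prime_temel}(2) for each matched prime pair.

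The main obstacle I anticipate is the first inclusion: one must exhibit \emph{enough} prime ideals of $T$ — specifically, for each prime $I\trianglelefteq R$ an honest prime ideal of $T$ whose $R$-corner is exactly $I$ — and confirm via Lemma \ref{temel_ideal_teo}(3) that the natural candidate $(I, \{v:vW\subseteq I\}, W, S)$ really is an ideal, and via the definition of prime right down to checking the $(1,1)$-entry that it is prime. The asymmetry here (we take the full module $W$ and the full ring $S$ in the other slots) is what makes the argument work without needing $VW=R$ or $WV=S$, unlike the $(2)\Rightarrow(1)$ direction of Theorem \ref{prime_temel}; verifying primeness of these particular "large" ideals is the crux, after which the radical computation is a formal intersection argument.
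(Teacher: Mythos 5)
Your overall strategy --- intersect the prime ideals of $T$, using Theorem \ref{prime_temel}(1)$\Rightarrow$(2) for one inclusion and an explicitly constructed supply of prime ideals of $T$ for the other --- is the same as the paper's, and your reverse inclusion and the final step forcing $V_1\subseteq V_0$ once the corners are pinned down are fine. But the step you yourself call the crux is broken: the $4$-tuple $(I,\{v:vW\subseteq I\},W,S)$ is in general \emph{not} an ideal of $T$. Among the eight conditions of Lemma \ref{temel_ideal_teo}(3) you must have $VW_1\subseteq I$ and $VJ\subseteq V_1$; with $W_1=W$ and $J=S$ these read $VW\subseteq I$ and $V\subseteq V_1$, both of which force $VW\subseteq I$ --- false in general. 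Concretely, in the context $(\Bbb Z,\Bbb Z,\Bbb Z,\Bbb Z)$ (i.e.\ $M_2(\Bbb Z)$) with $I=2\Bbb Z$ your candidate is $\left(\begin{smallmatrix}2\Bbb Z&2\Bbb Z\\ \Bbb Z&\Bbb Z\end{smallmatrix}\right)$, and $\left(\begin{smallmatrix}0&1\\0&0\end{smallmatrix}\right)\left(\begin{smallmatrix}0&0\\1&0\end{smallmatrix}\right)=\left(\begin{smallmatrix}1&0\\0&0\end{smallmatrix}\right)$ shows it is not closed under left multiplication. Even setting that aside, your primeness argument is incomplete: if $a\in I$ the matrix must still have its $(1,2)$-entry in $V_I\neq V$, so that slot is \emph{not} unconstrained.

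The repair: over a prime $I\trianglelefteq R$ the correct ``large'' prime of $T$ is $\bigl(I,\ \{v:vW\subseteq I\},\ \{w:Vw\subseteq I\},\ \{s\in S:VsW\subseteq I\}\bigr)$; all eight conditions of Lemma \ref{temel_ideal_teo}(3) then follow from $VW\subseteq R$ alone, and primeness is a direct matrix check. (Symmetrically for primes of $S$.) Alternatively one can avoid the construction entirely by invoking the standard identity $P(eTe)=eP(T)e$ for the idempotent $e=\left(\begin{smallmatrix}1&0\\0&0\end{smallmatrix}\right)$, which immediately identifies the $(1,1)$-corner of $P(T)$ with $P(R)$. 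For what it is worth, the published proof is equally terse at exactly this point: it asserts that $(I,V_I,W_J,J)$ is prime for arbitrary prime pairs $(I,J)$, which its own Theorem \ref{prime_temel} only guarantees under $VW=R$ and $WV=S$; so the obstacle you correctly identified is real, and your proposed way around it does not work as written.
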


\begin{proof}
Let $\mathcal {K}_1$ and $\mathcal K_2$ be prime spectrums of $R$
and $S$, respectively. Let $I\in \mathcal K_1 $ and $J\in \mathcal
K_2$. Also, $V_I=\{v\in V~:~vW\subseteq I\}=\{v\in V~:~Wv\subseteq
J\}$ and $W_J=\{w\in W~:~wV\subseteq J\}=\{w\in W~:~Vw\subseteq
I\}$. Then $(I,V_I,W_J,J)$ is a prime ideal of $T$.  Let $U$ be a
prime ideal of $T$. Then $U$ has the form $(I, V_I, W_J, J)$  for
some $I\in \mathcal K_1 $, $J\in \mathcal K_2 $ by Lemma
\ref{temel_ideal_teo}, Lemma \ref{kume_esitligi} and Theorem
\ref{prime_temel}. If all of the ideals $(I, V_I, W_J, J)$ for
$I\in \mathcal K_1 $, $J\in \mathcal K_2$ are intersected, then
the result is obtained by an easy calculation.
\end{proof}


Note that $V/V_0$ is a left $R/P(R)$ right $S/P(S)$-bimodule and
$W/W_0$ is a left $S/P(S)$ right $R/P(R)$-bimodule. Hence
$(R/P(R),V/V_0,W/W_0,S/P(S))$ is a Morita context where the
context products are given by
$$(v+V_0)(w+W_0)=vw+P(R),~ (w+W_0)(v+V_0)=wv+P(S)$$ for all $v\in
V$ and $w\in W$. Then we have the next result.

\begin{prop}
Let $T=(R,V,W,S)$ be a Morita context, and \linebreak $\left(%
\begin{array}{cc}
  R/P(R) & V/V_0 \\
  W/W_0 & S/P(S) \\
\end{array}%
\right)$ be defined above. Then $$T/P(T)\cong \left(%
\begin{array}{cc}
  R/P(R) & V/V_0 \\
  W/W_0 & S/P(S) \\
\end{array}%
\right).$$
\end{prop}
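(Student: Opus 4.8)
The plan is to define the obvious candidate map and check it is a ring isomorphism with kernel $P(T)$. Explicitly, I would introduce
\[
\varphi : T=\left(
\begin{array}{cc}
  R & V \\
  W & S \\
\end{array}
\right)\longrightarrow \left(
\begin{array}{cc}
  R/P(R) & V/V_0 \\
  W/W_0 & S/P(S) \\
\end{array}
\right),\qquad
\varphi\left(
\begin{array}{cc}
  r & v \\
  w & s \\
\end{array}
\right)=\left(
\begin{array}{cc}
  r+P(R) & v+V_0 \\
  w+W_0 & s+P(S) \\
\end{array}
\right).
\]
The first step is to verify $\varphi$ is a well-defined ring homomorphism: additivity is entry-wise and immediate, and multiplicativity reduces to checking that each of the four entries of a product is respected modulo the appropriate submodule. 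For instance the $(1,1)$-entry needs $r_1r_2+v_1w_2+P(R)$ to depend only on the cosets, which holds because $P(R)$ is an ideal of $R$ and, crucially, $v\in V_0$ implies $vW\subseteq P(R)$ (and $w\in W_0$ implies $Vw\subseteq P(R)$), so $V_0 w_2\subseteq P(R)$ and $v_1 W_0\subseteq P(R)$. The $(1,2)$-entry uses $P(R)V\subseteq V_0$ and $V P(S)\subseteq V_0$, which is exactly the content of Theorem \ref{prime-radical} together with Lemma \ref{temel_ideal_teo}(3) applied to the ideal $P(T)=(P(R),V_0,W_0,P(S))$; the $(2,1)$- and $(2,2)$-entries are symmetric. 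That $\varphi$ carries the matrix product to the product in the quotient Morita context is then just the definition of the context products $(v+V_0)(w+W_0)=vw+P(R)$ and $(w+W_0)(v+V_0)=wv+P(S)$ recorded just before the statement.

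The second step is surjectivity, which is trivial since each of the four coordinate quotient maps $R\to R/P(R)$, $V\to V/V_0$, $W\to W/W_0$, $S\to S/P(S)$ is onto, so any matrix of cosets is hit. The third step is to identify $\ker\varphi$. A matrix $\left(\begin{array}{cc} r & v \\ w & s \end{array}\right)$ lies in the kernel iff $r\in P(R)$, $v\in V_0$, $w\in W_0$, $s\in P(S)$, i.e. iff it lies in $\left(\begin{array}{cc} P(R) & V_0 \\ W_0 & P(S) \end{array}\right)$, which by Theorem \ref{prime-radical} is precisely $P(T)$. Then the First Isomorphism Theorem for rings yields $T/P(T)\cong \left(\begin{array}{cc} R/P(R) & V/V_0 \\ W/W_0 & S/P(S) \end{array}\right)$, as claimed.

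The only genuinely substantive point — and the one I would present with care rather than wave through — is the well-definedness of $\varphi$ on the off-diagonal entries, since this is where one must use that $P(T)$ really is an ideal of $T$ in the sense of Lemma \ref{temel_ideal_teo}(3): one needs all eight containments $V_0 W\subseteq P(R)$, $W_0 V\subseteq P(S)$, $P(R)V\subseteq V_0$, $P(S)W\subseteq W_0$, $V W_0\subseteq P(R)$, $W V_0\subseteq P(S)$, $V P(S)\subseteq V_0$, $W P(R)\subseteq W_0$. All of these follow from Theorem \ref{prime-radical}, which tells us $(P(R),V_0,W_0,P(S))=P(T)$ is an ideal of $T$, so Lemma \ref{temel_ideal_teo}(3) supplies them directly. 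Everything else is formal bookkeeping with cosets, so I would state it briskly. A final sentence could note that this proposition makes the reduction of primeness/semiprimeness questions for $T$ to the semiprime Morita context $T/P(T)$ explicit, tying back to Theorem \ref{prime_temel}.
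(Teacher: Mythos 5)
Your proposal is correct and is essentially the paper's argument: the paper defines the same coset-wise map (directly on $T/P(T)$ rather than via the First Isomorphism Theorem from $T$) and dismisses the verification as "easy calculations," which are exactly the coset bookkeeping and the eight containments from Lemma \ref{temel_ideal_teo}(3) that you spell out. The only cosmetic difference is that what you call well-definedness of $\varphi$ is really the well-definedness of the multiplication in the target quotient context, which the paper has already asserted in the paragraph preceding the proposition.
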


\begin{proof}
Let $f$ denote the map from $T/P(T)$ to $\left(%
\begin{array}{cc}
  R/P(R) & V/V_0 \\
  W/W_0 & S/P(S) \\
\end{array}%
\right)$ defined by $\left(%
\begin{array}{cc}
  r & v \\
  w & s \\
\end{array}%
\right)+P(T) \mapsto \left(%
\begin{array}{cc}
  r+P(R) & v+V_0 \\
  w+W_0 & s+P(S) \\
\end{array}%
\right)$. By easy calculations, it can be shown that $f$ is a ring
isomorphism.
\end{proof}

 Now we give a result related to an ideal of a Morita context to
be a semiprime ideal.

\begin{thm}\label{s.prime-temel}
Let $T=(R,V,W,S)$ be a Morita context. Then the following are
equivalent for an ideal $(I,V_1,W_1,J)$ of $T$.
\begin{enumerate}
\item $(I,V_1,W_1,J)$ is semiprime.
\item $I$ and $J$ are semiprime ideals of $R$ and $S$, respectively, and $V_1=\{v\in V : vW\subseteq I\}=\{v\in V : Wv\subseteq J\}$
and $W_1=\{w\in W : Vw\subseteq I\}=\{w\in W : wV\subseteq J\}$.
\end{enumerate}
\end{thm}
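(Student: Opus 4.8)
The plan is to mirror the structure of the proof of Theorem \ref{prime_temel}, substituting the ``$aRa$'' semiprimeness condition for the ``$aRb$'' primeness condition throughout. For the implication $(1)\Rightarrow(2)$, I would argue exactly as in Theorem \ref{prime_temel}: to see that $I$ is semiprime in $R$, take $r\in R$ with $rRr\subseteq I$, form the sandwich $\bigl(\begin{smallmatrix} r & 0\\ 0 & 0\end{smallmatrix}\bigr)\bigl(\begin{smallmatrix} R & V\\ W & S\end{smallmatrix}\bigr)\bigl(\begin{smallmatrix} r & 0\\ 0 & 0\end{smallmatrix}\bigr)=\bigl(\begin{smallmatrix} rRr & 0\\ 0 & 0\end{smallmatrix}\bigr)\subseteq (I,V_1,W_1,J)$, and use semiprimeness of the ideal to conclude $r\in I$; symmetrically for $J$. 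For the submodule identities, by Lemma \ref{temel_ideal_teo} we already have $V_1\subseteq A:=\{v\in V: vW\subseteq I\}$; to get the reverse inclusion, take $x\in A$, so $xW\subseteq I$, and sandwich $\bigl(\begin{smallmatrix} 0 & x\\ 0 & 0\end{smallmatrix}\bigr)$ to get $\bigl(\begin{smallmatrix} 0 & xWx\\ 0 & 0\end{smallmatrix}\bigr)\subseteq (I,V_1,W_1,J)$, whence $x\in V_1$ by semiprimeness; then Lemma \ref{kume_esitligi} converts this into the two-sided equality $V_1=\{v\in V: vW\subseteq I\}=\{v\in V: Wv\subseteq J\}$, and symmetrically for $W_1$.

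The implication $(2)\Rightarrow(1)$ is the substantive direction, and here is where I expect the main work. Take $\bigl(\begin{smallmatrix} r & v\\ w & s\end{smallmatrix}\bigr)\in T$ with
$$\left(\begin{array}{cc} r & v\\ w & s\end{array}\right)\left(\begin{array}{cc} R & V\\ W & S\end{array}\right)\left(\begin{array}{cc} r & v\\ w & s\end{array}\right)\subseteq\left(\begin{array}{cc} I & V_1\\ W_1 & J\end{array}\right),$$
and expand the four entries of the product. The $(1,1)$ entry contains $rRr$, so semiprimeness of $I$ gives $r\in I$ immediately — this is the key simplification over the prime case, where one had to split into $r_1\in I$ versus $r_2\in I$ and into the off-diagonal Case II. Dually the $(2,2)$ entry contains $sSs$, giving $s\in J$. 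With $r\in I$ and $s\in J$ in hand, the $(1,1)$ entry also contains $vWv$ (after expanding $vSw=vWVw$, one gets $vWVw\subseteq I$, hence $vWVwR\subseteq I$ and by primeness-style manipulation... but here only semiprimeness is available), so I would instead read off directly from the $(1,2)$ entry: it contains $vWv$, and since $vWvW\subseteq V_1W\subseteq I$ would require primeness, I should instead use that the $(1,2)$ entry contains $vWv\subseteq V_1$, so $vWvW\subseteq I$; as $I$ is semiprime this does not immediately give $vW\subseteq I$. The cleaner route: the $(1,1)$ entry contains $vWv$ directly? No — check: $(1,1)$ entry is $rRr+vWr+rVw+vSw$; with $r\in I$ the surviving term is $vSw=vWVw$. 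Hmm, that is $vWVw$, not $vWv$. So I would instead use that $vW\subseteq I$ forces $v\in V_1$: from $vWVw\subseteq I$ and $WVw\subseteq Sw$... this is where the argument needs care.

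The resolution I anticipate: once $r\in I$ and $s\in J$, consider the element $\bigl(\begin{smallmatrix} 0 & v\\ w & 0\end{smallmatrix}\bigr)$, whose ``square sandwich'' $\bigl(\begin{smallmatrix} 0 & v\\ w & 0\end{smallmatrix}\bigr)\bigl(\begin{smallmatrix} R & V\\ W & S\end{smallmatrix}\bigr)\bigl(\begin{smallmatrix} 0 & v\\ w & 0\end{smallmatrix}\bigr)$ has $(1,1)$ entry $vWv$... again $vW\cdot v$? Let me recompute: $\bigl(\begin{smallmatrix} 0 & v\\ w & 0\end{smallmatrix}\bigr)\bigl(\begin{smallmatrix} x & y\\ z & t\end{smallmatrix}\bigr)=\bigl(\begin{smallmatrix} vz & vt\\ wx & wy\end{smallmatrix}\bigr)$, and then $\bigl(\begin{smallmatrix} vz & vt\\ wx & wy\end{smallmatrix}\bigr)\bigl(\begin{smallmatrix} 0 & v\\ w & 0\end{smallmatrix}\bigr)=\bigl(\begin{smallmatrix} vtw & vzv\\ wyw & wxv\end{smallmatrix}\bigr)$. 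So the sandwich of $\bigl(\begin{smallmatrix} 0 & v\\ w & 0\end{smallmatrix}\bigr)$ is $\bigl(\begin{smallmatrix} vSw & vRv\\ wRw & wSv\end{smallmatrix}\bigr)$, but I only know the sandwich of $\bigl(\begin{smallmatrix} r & v\\ w & s\end{smallmatrix}\bigr)$, not of $\bigl(\begin{smallmatrix} 0 & v\\ w & 0\end{smallmatrix}\bigr)$, lies in the ideal — unless I can subtract off $\bigl(\begin{smallmatrix} r & 0\\ 0 & s\end{smallmatrix}\bigr)$, which does lie in $(I,V_1,W_1,J)$ since $r\in I$, $s\in J$. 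Indeed $\bigl(\begin{smallmatrix} r & v\\ w & s\end{smallmatrix}\bigr)-\bigl(\begin{smallmatrix} r & 0\\ 0 & s\end{smallmatrix}\bigr)=\bigl(\begin{smallmatrix} 0 & v\\ w & 0\end{smallmatrix}\bigr)$, and since $(I,V_1,W_1,J)$ is an ideal and the sandwich of the difference differs from the sandwich of $\bigl(\begin{smallmatrix} r & v\\ w & s\end{smallmatrix}\bigr)$ by terms lying in the ideal (each term involving a factor $r\in I$ or $s\in J$), we get $\bigl(\begin{smallmatrix} 0 & v\\ w & 0\end{smallmatrix}\bigr)T\bigl(\begin{smallmatrix} 0 & v\\ w & 0\end{smallmatrix}\bigr)\subseteq (I,V_1,W_1,J)$. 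Now semiprimeness of the ideal directly yields $\bigl(\begin{smallmatrix} 0 & v\\ w & 0\end{smallmatrix}\bigr)\in (I,V_1,W_1,J)$, i.e. $v\in V_1$ and $w\in W_1$, and combined with $r\in I$, $s\in J$ we conclude $\bigl(\begin{smallmatrix} r & v\\ w & s\end{smallmatrix}\bigr)\in (I,V_1,W_1,J)$, as required. The main obstacle is thus the bookkeeping in showing the difference of the two sandwiches lies in the ideal — this uses the eight containment relations from Lemma \ref{temel_ideal_teo}(3) ($IV\subseteq V_1$, $JW\subseteq W_1$, $VJ\subseteq V_1$, $WI\subseteq W_1$, etc.) to absorb every cross term — but no hypothesis like $VW=R$ is needed, which is why semiprimeness behaves better than primeness here and the theorem is a genuine equivalence.
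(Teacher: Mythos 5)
Your proof of (1)$\Rightarrow$(2) coincides with the paper's. In (2)$\Rightarrow$(1), the subtraction trick is correct as far as it goes: with $X=\left(\begin{smallmatrix} r & v\\ w & s\end{smallmatrix}\right)$ and $D=\left(\begin{smallmatrix} r & 0\\ 0 & s\end{smallmatrix}\right)\in(I,V_1,W_1,J)$ (once $r\in I$ and $s\in J$ are extracted from the diagonal entries), expanding $(X-D)M(X-D)=XMX-XMD-DMX+DMD$ does show $YTY\subseteq(I,V_1,W_1,J)$ for $Y=X-D=\left(\begin{smallmatrix} 0 & v\\ w & 0\end{smallmatrix}\right)$. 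But your concluding step is circular: you pass from $YTY\subseteq(I,V_1,W_1,J)$ to $Y\in(I,V_1,W_1,J)$ by invoking ``semiprimeness of the ideal,'' and the semiprimeness of $(I,V_1,W_1,J)$ is exactly statement (1), the thing being proved. Hypothesis (2) gives you semiprimeness of $I$ and $J$ only, so this last inference is not available.

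The step you talked yourself out of mid-proposal is precisely the one that closes the argument, and it is what the paper does. From the $(1,2)$ entry of the sandwich (of $X$ directly---the reduction to $Y$ is not needed) one reads off $vWv\subseteq V_1$, hence $(vW)^2=vWvW\subseteq V_1W\subseteq I$ using the ideal condition $V_1W\subseteq I$ from Lemma \ref{temel_ideal_teo}. Contrary to your worry, semiprimeness of $I$ \emph{does} then give $vW\subseteq I$: the set $vW$ is a right ideal of $R$ (since $WR\subseteq W$), and for $a=vz\in vW$ one has $aRa=(vzR)(vz)\subseteq (vW)(vW)\subseteq I$, so $a\in I$; therefore $v\in V_1$. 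Dually, the $(2,1)$ entry gives $wVw\subseteq W_1$, so $(wV)^2\subseteq W_1V\subseteq J$ and $w\in W_1$. With this repaired, your subtraction device becomes superfluous, though harmless. One further computational slip: the sandwich of $\left(\begin{smallmatrix} 0 & v\\ w & 0\end{smallmatrix}\right)$ is $\left(\begin{smallmatrix} vSw & vWv\\ wVw & wRv\end{smallmatrix}\right)$, not $\left(\begin{smallmatrix} vSw & vRv\\ wRw & wSv\end{smallmatrix}\right)$; your own intermediate calculation had the correct entries, and the $(1,2)$ entry $vWv$ is exactly what the fix above needs.
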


\begin{proof}
(1) $\Rightarrow$ (2) In order to see $I$ is a semiprime ideal of $R$, let $r\in R$ such that $rRr\subseteq I$. Then we have  $\left(%
\begin{array}{cc}
  r & 0 \\
  0 & 0 \\
\end{array}%
\right)\left(%
\begin{array}{cc}
  R & V \\
  W & S \\
\end{array}%
\right)\left(%
\begin{array}{cc}
  r & 0 \\
  0 & 0 \\
\end{array}%
\right)=\left(%
\begin{array}{cc}
  rRr & 0 \\
  0 & 0 \\
\end{array}%
\right)$ $\subseteq \left(%
\begin{array}{cc}
  I & V_1 \\
  W_1 & J \\
\end{array}%
\right)$. Since $\left(%
\begin{array}{cc}
  I & V_1 \\
  W_1 & J \\
\end{array}%
\right)$ is a semiprime ideal of $T$, $\left(%
\begin{array}{cc}
  r & 0 \\
  0 & 0 \\
\end{array}%
\right)\in \left(%
\begin{array}{cc}
  I & V_1 \\
  W_1 & J \\
\end{array}%
\right)$. Hence $r\in I$, and so $I$ is a semiprime ideal of $R$.
Similarly, one can show that $J$ is a semiprime ideal of $S$. By
Lemma \ref{temel_ideal_teo}, it is known that $V_1\subseteq
A=\{v\in V~:~vW\subseteq I\}$. It is enough to show $A\subseteq
V_1$. Take $x\in A$. So
$xW\subseteq I$. Hence $\left(%
\begin{array}{cc}
  0 & x \\
  0 & 0 \\
\end{array}%
\right)\left(%
\begin{array}{cc}
  R & V \\
  W & S \\
\end{array}%
\right)\left(%
\begin{array}{cc}
  0 & x \\
  0 & 0 \\
\end{array}%
\right)=\left(%
\begin{array}{cc}
  0 & xWx \\
  0 & 0 \\
\end{array}%
\right)\subseteq  \left(%
\begin{array}{cc}
  I & V_1 \\
  W_1 & J \\
\end{array}%
\right)$. The ideal  $\left(%
\begin{array}{cc}
  I & V_1 \\
  W_1 & J \\
\end{array}%
\right)$ being semiprime implies that $\left(%
\begin{array}{cc}
  0 & x \\
  0 & 0 \\
\end{array}%
\right)\in \left(%
\begin{array}{cc}
  I & V_1\\
  W_1 & J \\
\end{array}%
\right)$ and so $x\in V_1$. Thus $A\subseteq V_1$. By Lemma
\ref{kume_esitligi}, $V_1=\{v\in V~:~vW\subseteq I\}=\{v\in
V~:~Wv\subseteq J\}$. Similarly, one can show that $W_1=\{w\in
W~:~Vw\subseteq I\}=\{w\in W~:~wV\subseteq J\}$.\\
(2) $\Rightarrow$ (1) Let $\left(%
\begin{array}{cc}
  r & v \\
  w & s \\
\end{array}%
\right)\in T$ with $\left(%
\begin{array}{cc}
  r & v \\
  w & s \\
\end{array}%
\right)\left(%
\begin{array}{cc}
  R & V \\
  W & S \\
\end{array}%
\right)\left(%
\begin{array}{cc}
  r & v \\
  w & s \\
\end{array}%
\right)=$\\$\left(%
\begin{array}{cc}
  rRr+vWr+rVw+vSw & rRv+vWv+rVs+vSs \\
  wRr+sWr+wVw+sSw & wRv+sWv+wVs+sSs \\
\end{array}%
\right)$ $\subseteq \left(%
\begin{array}{cc}
  I & V_1 \\
  W_1 & J \\
\end{array}%
\right)$. By $(1,1)$ entry of above matrix, $rRr\subseteq I$. As
$I$ is semiprime, $r\in I$. Similarly $s\in J$. By $(2,1)$ entry
of above matrix $wVw\subseteq W_1$. Then $wVwV=(wV)^2\subseteq
W_1V\subseteq J$. Since $J$ is semiprime, $wV\subseteq J$. So
$w\in W_1$. Also by $(1,2)$ entry of above matrix, $vWv\subseteq
V_1$. Then $vWvW=(vW)^2\subseteq V_1W\subseteq I$. Since $I$ is a
semiprime ideal of $R$, $vW\subseteq I$. Thus $v\in V_1$.
Therefore $\left(%
\begin{array}{cc}
  r & v \\
  w & s \\
\end{array}%
\right)\in \left(%
\begin{array}{cc}
  I & V_1 \\
  W_1 & J \\
\end{array}%
\right)$. This completes the proof.
\end{proof}

In Theorem \ref{s.prime-temel}(2), semiprimeness of the ideals $I$
and $J$ in the rings $R$ and $S$, respectively, are not
superfluous as the following example shows.

\begin{ex}
Consider
$T=\left(%
\begin{array}{cc}
  \Bbb Z_4 & \overline 2\Bbb Z_4  \\
  \overline2\Bbb Z_4  & \Bbb Z_4  \\
\end{array}%
\right)$ as a Morita context, where the context products are the
same as the product in $\Bbb Z_4$. It is clear that $H=\left(%
\begin{array}{cc}
  \overline2\Bbb Z_4  & \overline2\Bbb Z_4  \\
  \overline2\Bbb Z_4  & \{\overline 0\} \\
\end{array}%
\right)$ is an ideal of $T$. Also $\left(%
\begin{array}{cc}
  \overline0 & \overline0 \\
  \overline0 & \overline2 \\
\end{array}%
\right)\left(%
\begin{array}{cc}
  \Bbb Z_4 & \overline 2\Bbb Z_4  \\
  \overline2\Bbb Z_4  & \Bbb Z_4  \\
\end{array}%
\right)\left(%
\begin{array}{cc}
  \overline0 & \overline0 \\
  \overline0 & \overline2 \\
\end{array}%
\right)=\left(%
\begin{array}{cc}
  \overline0 & \overline0 \\
  \overline0 & \overline0 \\
\end{array}%
\right)\in H$. But $\left(%
\begin{array}{cc}
  \overline0 & \overline0 \\
  \overline0 & \overline2 \\
\end{array}%
\right)\notin H$. Therefore $H$ is not semiprime ideal of $T$,
also $\{\overline 0\}$ is not semiprime in $\Bbb Z_4$.
\end{ex}

 In the sequel, we determine under what conditions a Morita
context is prime or semiprime.

\begin{thm} \label{prime-context}
Let $(R,V,W,S)$ be a Morita context and consider the following
conditions.
\begin{enumerate}
\item $(R,V,W,S)$ is a prime ring.
\item $R$, $S$ are prime rings and $_RV_S$, $_SW_R$ are prime modules.
\item $R$ and $S$ are prime rings.
\item $R$ or $S$ is a prime ring.
\end{enumerate}
Then {\rm(1)} $\Rightarrow$ {\rm(2)} $\Rightarrow$
{\rm(3)}$\Rightarrow$ {\rm(4)}. If $VW=R$ and $WV=S$, then
{\rm(4)} $\Rightarrow$ {\rm(1)}.
\end{thm}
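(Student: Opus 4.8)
The plan is to establish the chain of implications one link at a time, leveraging Theorem \ref{prime_temel} where possible, since a ring is prime exactly when its zero ideal is prime.

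\textbf{Step 1: (1) $\Rightarrow$ (2).} Suppose $T = (R,V,W,S)$ is a prime ring, i.e.\ $(0,0,0,0)$ is a prime ideal of $T$. Applying the implication (1) $\Rightarrow$ (2) of Theorem \ref{prime_temel} to the ideal $(I,V_1,W_1,J) = (0,0,0,0)$, I immediately get that $I = 0$ is a prime ideal of $R$ and $J = 0$ is a prime ideal of $S$, so $R$ and $S$ are prime rings. It then remains to see that $_RV_S$ and $_SW_R$ are prime modules. For the left $R$-module $V$: take $r \in R$, $v \in V$ with $rRv = 0$; I want $rV = 0$ or $v = 0$. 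Embed these as the matrices $\left(\begin{array}{cc} r & 0 \\ 0 & 0 \end{array}\right)$ and $\left(\begin{array}{cc} 0 & v \\ 0 & 0 \end{array}\right)$ in $T$ and compute $\left(\begin{array}{cc} r & 0 \\ 0 & 0 \end{array}\right) T \left(\begin{array}{cc} 0 & v \\ 0 & 0 \end{array}\right)$; the only possibly nonzero entry is in position $(1,2)$ and equals $rRv = 0$ (using the product formula, the $(1,2)$ entry picks up $rRv$ from the $R\cdot V$ action). Primeness of $T$ forces one of the two matrices to be zero, i.e.\ $r = 0$ (hence $rV = 0$) or $v = 0$. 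The argument for $_SW_R$ is symmetric, using matrices in the $(2,1)$ position.

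\textbf{Step 2: (2) $\Rightarrow$ (3) $\Rightarrow$ (4).} Both are trivial: (2) $\Rightarrow$ (3) just drops the module conditions, and (3) $\Rightarrow$ (4) weakens ``$R$ and $S$'' to ``$R$ or $S$.''

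\textbf{Step 3: (4) $\Rightarrow$ (1) under $VW = R$, $WV = S$.} This is the substantive direction and the expected main obstacle, since the hypothesis is only that \emph{one} of $R$, $S$ is prime. Without loss of generality assume $R$ is prime. I first observe that $VW = R$ and $WV = S$ propagate primeness from $R$ to $S$: if $s_1 S s_2 = 0$ in $S$, then $s_1 (WV) s_2 = 0$, so $W s_1 S s_2 V = 0$ hence $(W s_1 V)R(W s_2 V) = (Ws_1 V)(VW)^{-\text{like}}$; more carefully, $W s_1 (WV) s_2 V \subseteq W s_1 S s_2 V = 0$ gives $(W s_1 V)(W s_2 V) \subseteq W s_1 (VW) s_2 V = W s_1 R s_2 V$, and iterating with $VW=R$ shows $(Ws_1V) R (W s_2 V) = 0$; primeness of $R$ then yields $Ws_1V = 0$, whence $s_1 = s_1(WV) \cdot$ — here I use $s_1 S = s_1 WV$ and $S s_1 = WV s_1$ together with $V(Ws_1V)W = 0$ implying $R s_1 R' = 0$-type collapse — to conclude $s_1 = 0$ or $s_2 = 0$. (I will present this computation cleanly; the key mechanism is that multiplying on the left by $V$, on the right by $W$, converts $S$-statements to $R$-statements and vice versa, and $VW=R$, $WV=S$ make this reversible.) Once both $R$ and $S$ are prime, I apply the implication (2) $\Rightarrow$ (1) of Theorem \ref{prime_temel} with $(I,V_1,W_1,J) = (0,0,0,0)$: the hypotheses $VW = R$, $WV = S$ are exactly what that implication requires, and one checks that $\{v \in V : vW \subseteq 0\} = 0$ (because if $vW = 0$ then $vWv = 0$, and more to the point $v(WV) = vS$, $R v = VW v$, so $V(vW)\cdots$ forces $v=0$ using primeness of $R$ via $vR'v$-arguments — in fact $vW = 0 \Rightarrow vWV = vS = 0 \Rightarrow V vW = 0$; but also $v \in V = RV = VWV$ so... ), i.e.\ the submodules $V_1, W_1$ forced by condition (2) all equal $0$. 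Hence $(0,0,0,0)$ is prime and $T$ is a prime ring.

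\textbf{Main obstacle.} The delicate point is Step 3's transfer of primeness between $R$ and $S$ and the verification that $\{v : vW \subseteq 0\} = 0$ and $\{w : wV \subseteq 0\} = 0$; these should follow from a short "sandwich" computation using $VW = R$ and $WV = S$ to move any would-be annihilating element through a copy of $R$ (where primeness is available), but getting the bracketing of the context products exactly right is where care is needed. Everything else reduces to the already-proved Theorem \ref{prime_temel} applied to the zero ideal, plus the elementary matrix embeddings in Step 1.
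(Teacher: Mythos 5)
Your Steps 1 and 2 are sound and coincide with the paper's route: the paper also proves (1)$\Rightarrow$(2) by the matrix embeddings you describe (it verifies $R$ prime directly with the same computation rather than quoting Theorem \ref{prime_temel}, which is immaterial), and (2)$\Rightarrow$(3)$\Rightarrow$(4) are indeed trivial. The genuine gap is Step 3, which you yourself flag as the main obstacle and then do not actually carry out. The transfer of primeness from $R$ to $S$ is left as a string of half-finished manipulations --- expressions like ``$(Ws_1V)(VW)^{-\text{like}}$'' and ``$Rs_1R'$-type collapse,'' and sentences ending in ``so\dots'' --- that do not assemble into an argument; moreover the expression $Ws_1V$ is not even well formed in a Morita context ($W$ is a \emph{left} $S$-module, so $ws_1$ is undefined; the meaningful object is $Vs_1W\subseteq R$), and the one inclusion you do display, $(Ws_1V)(Ws_2V)\subseteq Ws_1Rs_2V$, lands on a set that is not visibly zero. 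The same incompleteness affects your verification that $\{v\in V:vW=0\}=0$. Since these are exactly the points on which (4)$\Rightarrow$(1) turns, the proof is not complete as written.

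Both missing pieces are short once set up correctly. For the annihilator conditions the paper argues: $vW=0$ gives $vWV=vS=0$, hence $v=v\cdot 1_S=0$ because $WV=S$ and the modules are unitary; dually $Wv=0$ gives $VWv=Rv=0$, so $v=0$; similarly for $W$. For the transfer of primeness the paper avoids your element-by-element sandwich altogether: it shows that $VsW=0$ forces $WVsWV=SsS=0$, hence $s=0$, and then cites Amitsur (\cite[Theorem 27]{Amitsur}) to conclude that $S$ is prime when $R$ is. If you prefer a self-contained version of your sandwich, the correct bracketing is: from $s_1Ss_2=0$ one gets $(Vs_1W)R(Vs_2W)=Vs_1(WV)(WV)s_2W=V(s_1Ss_2)W=0$, so primeness of $R$ yields $Vs_iW=0$ for some $i$, and then $Ss_iS=W(Vs_iW)V=0$ gives $s_i=0$. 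With either repair, the final appeal to Theorem \ref{prime_temel} (2)$\Rightarrow$(1) applied to the zero ideal is exactly the paper's argument.
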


\begin{proof}
(1) $\Rightarrow$ (2) Let $r_1, r_2\in R$ such that $r_1Rr_2=0$. Then we have $\left(%
\begin{array}{cc}
  r_1 & 0 \\
  0 & 0 \\
\end{array}%
\right)\left(%
\begin{array}{cc}
  R & V \\
  W & S \\
\end{array}%
\right)\left(%
\begin{array}{cc}
  r_2 & 0 \\
  0 & 0 \\
\end{array}%
\right)=$ $\left(%
\begin{array}{cc}
  r_1Rr_2 & 0 \\
  0 & 0 \\
\end{array}%
\right)$. As $(R,V,W,S)$ is prime, $r_1=0$ or $r_2=0$. Hence $R$
is a prime ring. Similarly, it can be shown that $S$ is prime.
Now let $r\in R$ and $v\in V$ with $rRv=0$. Then $\left(%
\begin{array}{cc}
  r & 0 \\
  0 & 0 \\
\end{array}%
\right)\left(%
\begin{array}{cc}
  R & V \\
  W & S \\
\end{array}%
\right)\left(%
\begin{array}{cc}
  0 & v \\
  0 & 0 \\
\end{array}%
\right)=$ $\left(%
\begin{array}{cc}
  0 & rRv \\
  0 & 0 \\
\end{array}%
\right)$. Thus $r=0$ or $v=0$. This implies that $rV=0$ or
$v=0$. Hence $V$ is prime as a left $R$-module. On the other hand, let $s\in S$ and $v\in V$ with $vSs=0$. Since $\left(%
\begin{array}{cc}
  0 & v \\
  0 & 0 \\
\end{array}%
\right)\left(%
\begin{array}{cc}
  R & V \\
  W & S \\
\end{array}%
\right)\left(%
\begin{array}{cc}
  0 & 0 \\
  0 & s \\
\end{array}%
\right)=$ $\left(%
\begin{array}{cc}
  0 & vSs \\
  0 & 0 \\
\end{array}%
\right)$, by (1),  $v=0$ or $s=0$. Thus $v=0$ or $Vs=0$. It follows that $V$ is also prime as a right $S$-module.
By the same technique, it can be shown that $_SW_R$ is a prime module.\\
(2) $\Rightarrow$ (3) and  (3) $\Rightarrow$ (4) are obvious.\\
(4) $\Rightarrow$ (1) Assume that $R$ is prime. Let $VsW=0$ for
$s\in S$. By hypothesis, $WVsWV=SsS=0$. So $s=0$. By \cite[Theorem
27]{Amitsur}, we have $S$ is prime. If $S$ is prime, similar proof
can work for $R$.
 We claim that $0=\left(%
\begin{array}{cc}
  0_R & 0_V \\
  0_W & 0_S \\
\end{array}%
\right)$ is a prime ideal of $\left(%
\begin{array}{cc}
  R & V \\
  W & S \\
\end{array}%
\right)$. Since $R$ and $S$ are prime rings, $0_R$ and $0_S$ are
prime ideals of $R$ and $S$, respectively. Now consider the sets
$\{v\in V : vW=0\}$ and $\{v\in V : Wv=0\}$. Let $v\in \{v\in V :
vW=0\}$. Then $vW=0$. By hypothesis, $vWV=vS=0$, and so $v=0$.
Hence $\{v\in V : vW=0\}=0$. On the other hand, being $VW=R$
implies $\{v\in V : Wv=0\}=0$. Similarly, $\{w\in W :
Vw=0\}=\{w\in W : wV=0\}=0$. Therefore $0$ is a prime ideal of $\left(%
\begin{array}{cc}
  R & V \\
  W & S \\
\end{array}%
\right)$ by Theorem \ref{prime_temel}. This completes the proof.
\end{proof}

\begin{thm}\label{s.prime-ring}
Let $T=(R,V,W,S)$ be a Morita context and consider the following
conditions.
\begin{enumerate}
\item $(R,V,W,S)$ is a semiprime ring.
\item $R$ and $S$ are semiprime rings.
\item $R$ or $S$ is a semiprime ring.
\end{enumerate}
Then {\rm(1)} $\Rightarrow$ {\rm(2)}$\Rightarrow$ {\rm(3)}. If
$VW=R$ and $WV=S$, then {\rm(3)} $\Rightarrow$ {\rm(1)}.
\end{thm}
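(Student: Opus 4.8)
The plan is to follow exactly the pattern of Theorem \ref{prime-context}, using Theorem \ref{s.prime-temel} as the semiprime analogue of Theorem \ref{prime_temel}. For $(1)\Rightarrow(2)$, I would take $r\in R$ with $rRr=0$, embed it as $\left(\begin{smallmatrix} r & 0 \\ 0 & 0\end{smallmatrix}\right)$, and compute $\left(\begin{smallmatrix} r & 0 \\ 0 & 0\end{smallmatrix}\right)T\left(\begin{smallmatrix} r & 0 \\ 0 & 0\end{smallmatrix}\right)=\left(\begin{smallmatrix} rRr & 0 \\ 0 & 0\end{smallmatrix}\right)=0$; since $T$ is semiprime this forces $\left(\begin{smallmatrix} r & 0 \\ 0 & 0\end{smallmatrix}\right)=0$, hence $r=0$, so $R$ is semiprime. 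The argument for $S$ is symmetric, and $(2)\Rightarrow(3)$ is trivial.

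For $(3)\Rightarrow(1)$ under the hypotheses $VW=R$ and $WV=S$, I would first upgrade "$R$ or $S$ semiprime" to "$R$ and $S$ semiprime": if $R$ is semiprime and $SsS=0$ for some $s\in S$, then $0=SsS=WVsWV$, so $VsW$ is a subset of $R$ with $(VsW)R(VsW)\subseteq V(sWVsW)=V(s\cdot 0)=0$ — more directly, $VsW\cdot VsW = Vs(WV)sW = VsSsW$, and one checks $R(VsW)R \subseteq V(WVsWV)W = V(SsS)W = 0$, so by semiprimeness of $R$ the ideal generated by $VsW$ vanishes, giving $VsW=0$ and then $s = SsS\cdot(\text{unit combination})$... more cleanly: $WVsWV = SsS = 0$ and $SsS=0$ already says the ideal $SsS$ of $S$ is nilpotent, but I want $s=0$ itself, so I argue $Ss S = 0 \Rightarrow (\text{the principal ideal } SsS)$ is a nilpotent (indeed square-zero) ideal of $S$, and since $S$ is forced semiprime this shows $s=0$; to get $S$ semiprime from $R$ semiprime I instead observe $VsW$ is an ideal-like subset of $R$ with $(VsW)(VsW)=VsSsW=0$ when $SsS=0$, hence $VsW$ generates a square-zero ideal of $R$, so $VsW=0$, whence $s\in S = WV$ gives $WsV \subseteq$ ... and finally $s = (WV)s(WV)$-type expansion forces $s=0$. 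This is essentially the $(4)\Rightarrow(1)$ step of Theorem \ref{prime-context} with "prime" replaced by "semiprime" throughout, and I would cite \cite[Theorem 27]{Amitsur} (or argue directly) for this transfer.

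Once both $R$ and $S$ are semiprime, I would verify condition (2) of Theorem \ref{s.prime-temel} for the zero ideal $(0,0,0,0)$: $0_R$ and $0_S$ are semiprime ideals; and I must check $\{v\in V : vW=0\}=\{v\in V : Wv=0\}=0$ and $\{w\in W : Vw=0\}=\{w\in W : wV=0\}=0$. For the first set: if $vW=0$ then $vWV=0$, i.e. $vS=0$ (using $WV=S$), so $v=v\cdot 1=0$; and $\{v\in V : Wv=0\}=0$ because if $Wv=0$ then $VWv=0$, i.e. $Rv=0$ (using $VW=R$), so $v=0$. The sets involving $W$ are handled symmetrically. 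Thus the zero ideal of $T$ satisfies Theorem \ref{s.prime-temel}(2), hence is semiprime, i.e. $T$ is a semiprime ring.

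The main obstacle is the transfer step "$R$ semiprime $\Rightarrow$ $S$ semiprime" (and vice versa): one must be careful that the bimodule products only land where the associativity of the $2\times2$ array guarantees, and that $SsS=0$ genuinely yields a square-zero ideal whose image under the context maps is a square-zero ideal in the semiprime ring on the other side. Everything else is the routine embedding-and-compute bookkeeping already exhibited in the earlier proofs. Since the hypotheses $VW=R$, $WV=S$ make the context strict, this transfer is exactly the Morita-invariance of semiprimeness, which is standard.
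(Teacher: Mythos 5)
Your proposal follows the same overall strategy as the paper's proof: the embedding computation for (1)$\Rightarrow$(2), a transfer of semiprimeness across the context for (3), and then an appeal to the earlier structural results to conclude that the zero ideal of $T$ is semiprime. There are two points of divergence worth recording. First, for the final step the paper computes the prime radical $P(T)$ via Theorem \ref{prime-radical} and shows $P(T)=0$, whereas you apply Theorem \ref{s.prime-temel} ((2)$\Rightarrow$(1)) directly to the zero ideal after checking $\{v\in V: vW=0\}=\{v\in V: Wv=0\}=0$ and the analogous conditions for $W$; both routes are valid and yours is arguably the more direct, since it avoids invoking the radical computation at all. Second, for the transfer step the paper verifies that $VsW=0$ forces $s=0$ and then cites \cite[Corollary 21]{Amitsur}, while you attempt the transfer by hand; your version contains the right computation but is garbled by a persistent confusion between $sSs$ and $SsS$. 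As written, starting from ``$SsS=0$'' proves nothing, because $S$ is unital and $s=1\cdot s\cdot 1\in SsS$, so that hypothesis already gives $s=0$ trivially. The correct chain is: assume $sSs=0$; then $(VsW)R(VsW)\subseteq V(s(WRV)s)W\subseteq V(sSs)W=0$, and since $VsW$ (more precisely, the set of finite sums $\sum v_isw_i$) is an ideal of $R$ and $R$ is semiprime, $VsW=0$; hence $SsS=(WV)s(WV)=W(VsW)V=0$, and now unitality gives $s=0$. All of these pieces appear in your sketch, but in the wrong order and under the wrong hypothesis; once reordered as above the argument is complete and matches what Amitsur's result packages for the paper.
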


\begin{proof}
(1) $\Rightarrow$ (2) It is similar to proof of Theorem
\ref{prime-context}  (1) $\Rightarrow$ (2).\\
(2) $\Rightarrow$ (3) It is clear.\\
(3) $\Rightarrow$ (1) Assume that $R$ is a semiprime ring. Let
$VsW=0$ for $s\in S$. Via similar discussion to proof of Theorem
\ref{prime-context}, we have $s=0$. So $S$ is semiprime, by
\cite[Corollary 21]{Amitsur}. Similarly we can prove that $R$ is
semiprime, if $S$ is semiprime. If we show that $P(T)=0$, then the
proof is completed.
By Theorem \ref{prime-radical}, $P(T)=\left(%
\begin{array}{cc}
  P(R) & V_0 \\
  W_0 & P(S) \\
\end{array}%
\right)$ where $V_0=\{v\in V : vW\subseteq P(R)\}=\{v\in V :
Wv\subseteq P(S)\}$ and $W_0=\{w\in W : wV\subseteq P(S)\}=\{w\in
W : Vw\subseteq P(R)\}$. As $R$ and $S$ are semiprime, $P(R)=0$
and $P(S)=0$. We claim that $V_0=0$ and $W_0=0$. Let $v\in V_0$.
Then $vW=0$, and so $vWV=vS=0$. Hence $v=0$, and thus $V_0=0$.
Similarly, one can show that $W_0=0$. Therefore $P(T)=0$, as
desired.
\end{proof}

\end{document}